\newtheorem{theorem}{Theorem}[section]
\newtheorem{corollary}[theorem]{Corollary}
\newtheorem{lemma}[theorem]{Lemma}
\newtheorem{proposition}[theorem]{Proposition}
\newtheorem{definition}{Definition}[section]
\newcommand{\N}{\mathbb{N}}
\newcommand{\spn}{\textnormal{span}}
\newcommand{\dual}[1]{ {#1}^{\ast} }
\newcommand{\ddual}[1]{ {#1}^{\ast\ast} }
\newcommand{\jacobsonradical}[1]{ \texttt{rad}\left({#1}\right) }
\newcommand{\ap}[1]{ \textnormal{ap}\left({#1}\right) }
\newcommand{\wap}[1]{ \textnormal{wap}\left({#1}\right) }
\newcommand{\cc}[1]{ \textnormal{cc}\left({#1}\right) }
\newcommand{\rcc}[1]{ \textnormal{rcc}\left({#1}\right) }
\newcommand{\lcc}[1]{ \textnormal{lcc}\left({#1}\right) }
\newcommand{\wpL}[1]{ \textnormal{wpL}\left({#1}\right) }
\newcommand{\kerwpL}[1]{ W }
\newcommand{\ptensor}[2]{ {#1}\hat{\otimes}_{\pi}{#2} }
\newcommand{\itensor}[2]{ {#1}\hat{\otimes}_{\epsilon}{#2} }
\newcommand{\hull}[1]{\mathbf{h}({#1})}
\begin{document}
\title{A Note on Reflexive Banach Algebras}
\author{Onur Oktay}
\maketitle

\begin{abstract}
We inspect the properties of reflexive Banach algebras, which are related to the pointwise products of its weakly null sequences. 
\end{abstract}


\section{Introduction}

We had studied the properties of the space $\wpL{A}$ for a Banach algebra $A$ in a previous article \cite{Oktay21}. When $A$ is reflexive as a Banach space, $\wpL{A}$ is comprised of almost periodic functionals. This simple observation in \cite{Oktay21} led us to decompose a reflexive amenable Banach algebra as a direct sum of two ideals, one of which is finite dimensional and the other admits no nonzero almost periodic functionals. Our main tool is the set $N$ of all weak limit points of the pointwise products of the weakly null sequences. One of the results (Corollary~\ref{cor:conjecture}) we obtain is an equivalent statement of the conjecture by Gale, Ransford, and White on reflexive amenable Banach algebras \cite{GRW92}. 

In Section~\ref{sec:kerwpL}, we study the properties of the primitive ideals and the maximal left ideals that contain $N$, as well as their implications for reflexive amenable Banach algebras. Inspired by Corollary~\ref{cor:conjecture}, in Section~\ref{sec:wpL0} we provide some properties of reflexive unital Banach algebras, which are spanned by $N$.


\section{Preliminaries and Notation}\label{sec:prelim}

Let $A$ be a Banach algebra and $E$ be a Banach left $A$-module. 
The multiplication operator $L_a:E\to E$ is defined by $L_ax = ax$.
Moreover, one defines
\begin{eqnarray*}
fa(x) = f(ax) = xf(a)
\end{eqnarray*}
for all $a\in A$, $x\in E$ and $f\in\dual{E}$.
%
The maps $T_f:A\to\dual{E}$ and $S_f:E\to\dual{A}$ are defined by $T_fa = fa$ and $S_fx = xf$.

When $E$ is a Banach $A$-bimodule, the right multiplication operator $R_a:E\to E$ is defined by $R_ax = xa$. Further,
\begin{eqnarray*}
af(x) = f(xa) = fx(a)
\end{eqnarray*}
for all $a\in A$, $x\in E$ and $f\in\dual{E}$.
The maps $\sigma_f:A\to\dual{E}$ and $\tau_f:E\to\dual{A}$ are defined by $\sigma_fa = af$ and $\tau_fx = fx$. Clearly, $T_f = \tau_f$ and $S_f=\sigma_f$ when $E=A$.

\vspace{3mm}

The center of an $A$-bimodule $E$ is
\begin{eqnarray*}
\mathcal{Z}(A,E) = \{\texttt{m}\in E : a\texttt{m}=\texttt{m}a\hspace{3mm}\forall a\in A \}
\end{eqnarray*}
If $E=E_1\oplus E_2$ as a direct sum of $A$-bimodules, then 
\begin{eqnarray}\label{eq:center-directsum}
\mathcal{Z}(A,E) = \mathcal{Z}(A,E_1) \oplus \mathcal{Z}(A,E_2)
\end{eqnarray}
as $\mathcal{Z}(A)$-modules. In fact, for any $\texttt{M}\in\mathcal{Z}(A,E)$ there exist unique $\texttt{m}_i\in E_i$ ($i=1,2$) such that $\texttt{M} = \texttt{m}_1 + \texttt{m}_2$. 
Thus, for all $a\in A$
$$0 = a\texttt{M}-\texttt{M}a = (a\texttt{m}_1-\texttt{m}_1a) + (a\texttt{m}_2-\texttt{m}_2a)$$
where, obviously, $(a\texttt{m}_i-\texttt{m}_ia)\in E_i$ ($i=1,2$). Thus, $a\texttt{m}_i=\texttt{m}_ia$ ($i=1,2$).

\vspace{3mm}

We follow the notation and terminology in \cite{Ryan02} for the tensor products. For two Banach spaces $X$ and $Y$, $\ptensor{X}{Y}$ and $\itensor{X}{Y}$ denote the projective and injective tensor products, respectively. 
$\itensor{X}{\dual{Y}}$ is isomorphic to the space of approximable operators $\mathcal{A}(X,Y)$, and $\dual{(\ptensor{X}{Y})}$ is isomorphic to the space of all bounded linear operators $\mathcal{B}(X,\dual{Y})$.
If $X$ or $Y$ has the metric approximation property (AP), then the canonical map $\ptensor{X}{Y}\to\dual{(\itensor{\dual{X}}{\dual{Y}})}$ is an isometric embedding \cite[Theorem 4.14]{Ryan02}.
If $X$ and $Y$ are reflexive (thus have the Radon-Nikodym property) and have AP, then $\ptensor{X}{Y} = \dual{(\itensor{\dual{X}}{\dual{Y}})}$ \cite[Theorem 5.33]{Ryan02}. Thus
\begin{eqnarray}\label{eq:directsumtensor}
\ddual{(\ptensor{X}{Y})} 
= (\ptensor{X}{Y}) \oplus (\itensor{\dual{X}}{\dual{Y}})^{\perp}
\end{eqnarray}
as Banach spaces, where $(\itensor{\dual{X}}{\dual{Y}})^{\perp}$ is the set of all $\texttt{M}\in\ddual{(\ptensor{X}{Y})}$ such that $\texttt{M}(\itensor{\dual{X}}{\dual{Y}})=\{0\}$.
When $A$ is a Banach algebra, $\ptensor{A}{A}$ and $\itensor{A}{A}$ are Banach $A$-bimodules, e.g., the appendix in \cite{Runde02}. 

For a Banach algebra $A$, $\jacobsonradical{A}$ denotes the Jacobson radical of $A$. For a subset $S\subseteq A$, its hull $\hull{S}$ is the set of all primitive ideals of $A$ that contain $S$. If $I$ is the ideal generated by $S$, then $\hull{S} = \hull{I}$. We refer the reader to  \cite{BonsallDuncan73,Palmer1,Rickart60} for an extensive treatment of the general theory of Banach algebras.

\begin{lemma}\normalfont\label{lemma:wpL}
Let $A$ be a Banach algebra, $E$ be a Banach left $A$-module, and $f\in\dual{E}$. Then, the following are equivalent.
\begin{enumerate}
\item[{\it i.}] If $(a_n)$ in $A$ and $(x_n)$ in $E$ are weakly null sequences, then $f(a_nx_n)\to 0$.
\item[{\it ii.}] If $(a_n)$ is a weakly null (resp. weakly Cauchy) sequence in $A$ and $(x_n)$ is a weakly Cauchy (resp. weakly null) sequence in $E$, then $f(a_nx_n)\to 0$.
\item[{\it iii.}] $T_f$ maps weakly precompact sets onto L-sets.
\item[{\it iii'.}] $S_f$ maps weakly precompact sets onto L-sets.
\end{enumerate}
If $E$ is a Banach $A$-bimodule, then the following are also equivalent.
\begin{enumerate}
\item[{\it iv.}] If $(a_n)$ in $A$ and $(x_n)$ in $E$ are weakly null sequences, then $f(x_na_n)\to 0$.
\item[{\it v.}] If $(a_n)$ is a weakly null (resp. weakly Cauchy) sequence in $A$ and $(x_n)$ is a weakly Cauchy (resp. weakly null) sequence in $E$, then $f(x_na_n)\to 0$.
\item[{\it vi.}] $\tau_f$ maps weakly precompact sets onto L-sets.
\item[{\it vi'.}] $\sigma_f$ maps weakly precompact sets onto L-sets.
\end{enumerate}
\end{lemma}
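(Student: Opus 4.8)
The plan is to exploit the symmetry of the four conditions. Both $T_f$ and $S_f$ are built from the single bilinear form $B(a,x)=f(ax)$ through $\langle T_fa,x\rangle=\langle S_fx,a\rangle=f(ax)$, and the bimodule conditions (iv)--(vi$'$) are the verbatim mirror of (i)--(iii$'$) with the product $ax$ replaced by $xa$ (equivalently, they are (i)--(iii$'$) for $E$ viewed over the opposite algebra, with $(\tau_f,\sigma_f)$ in place of $(T_f,S_f)$). Hence it suffices to establish the equivalence of (i)--(iii$'$); the second block then follows by the identical argument. I would organize the first block around the implications (ii)$\Rightarrow$(iii)$\Rightarrow$(i) and (ii)$\Rightarrow$(iii$'$)$\Rightarrow$(i), together with the trivial (ii)$\Rightarrow$(i) and the decisive reverse implication (i)$\Rightarrow$(ii).

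The passages through the L-set conditions are routine. Recall that $K\subseteq\dual{X}$ is an L-set when $\sup_{\phi\in K}|\phi(x_n)|\to0$ for every weakly null $(x_n)$ in $X$. For (ii)$\Rightarrow$(iii), suppose $B\subseteq A$ is weakly precompact but $T_f(B)$ fails to be an L-set: then some weakly null $(x_n)$ in $E$ and some $a_n\in B$ satisfy $|f(a_nx_n)|\ge\delta$. Weak precompactness of $B$ yields a weakly Cauchy subsequence of $(a_n)$, and pairing it with the corresponding weakly null subsequence of $(x_n)$ contradicts (ii); the argument for (ii)$\Rightarrow$(iii$'$) is the same with the roles of $A$ and $E$ exchanged. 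Conversely (iii)$\Rightarrow$(i) is immediate on taking $B=\{a_n:n\in\N\}$, which is weakly precompact since $(a_n)$ is in particular weakly Cauchy, and testing the L-set $T_f(B)$ against the weakly null sequence $(x_n)$; likewise (iii$'$)$\Rightarrow$(i) with $B=\{x_n:n\in\N\}$.

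The heart of the matter is (i)$\Rightarrow$(ii), where a weakly Cauchy sequence must be controlled using only the weakly-null-times-weakly-null hypothesis. I would argue by contradiction in the case $(a_n)$ weakly Cauchy and $(x_n)$ weakly null, passing to a subsequence with $|f(a_nx_n)|\ge\delta$, and examine the double array $c_{n,m}=f(a_nx_m)$. For each fixed $n$ one has $c_{n,m}=\langle T_fa_n,x_m\rangle\to0$ as $m\to\infty$, because $x_m\to0$ weakly; this is the only limit I would invoke. A diagonal extraction then produces indices $n_1<n_2<\cdots$ for which the strict upper triangle is summably small, say $|f(a_{n_j}x_{n_k})|<\delta\,2^{-k}$ whenever $j<k$, while the diagonal still satisfies $|f(a_{n_k}x_{n_k})|\ge\delta$. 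The key device is that the increments $\alpha_k=a_{n_{k+1}}-a_{n_k}$ of a weakly Cauchy sequence are weakly null. Pairing $(\alpha_k)$ with the weakly null sequence $(x_{n_{k+1}})$ gives $f(\alpha_kx_{n_{k+1}})=f(a_{n_{k+1}}x_{n_{k+1}})-f(a_{n_k}x_{n_{k+1}})$, whose first term has modulus $\ge\delta$ and whose second is $<\delta\,2^{-(k+1)}$; thus $|f(\alpha_kx_{n_{k+1}})|\ge\delta/2$, contradicting (i). The mirror case, $(a_n)$ weakly null and $(x_n)$ weakly Cauchy, runs identically using instead that $c_{n,m}\to0$ as $n\to\infty$ for each fixed $m$, arranging the strict lower triangle to be small and pairing $a_{n_{k+1}}$ with the weakly null increments $x_{n_{k+1}}-x_{n_k}$.

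The main obstacle is precisely this last step: replacing a weakly Cauchy sequence by a genuinely weakly null one without destroying the lower bound $\delta$. The telescoping trick resolves it because subtracting consecutive terms simultaneously produces a weakly null sequence and, thanks to the controlled triangle, leaves the diagonal contribution essentially intact. Once (i)$\Leftrightarrow$(ii)$\Leftrightarrow$(iii)$\Leftrightarrow$(iii$'$) is established, the bimodule equivalences (iv)--(vi$'$) follow by repeating the entire argument with $f(ax)$ replaced throughout by $f(xa)$.
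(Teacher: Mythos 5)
Your proof is correct and complete: the routine passages (ii)$\Rightarrow$(iii)$\Rightarrow$(i), (ii)$\Rightarrow$(iii$'$)$\Rightarrow$(i) are handled properly, and the decisive step (i)$\Rightarrow$(ii) --- extracting a subsequence whose off-diagonal entries $f(a_{n_j}x_{n_k})$ are summably small and then telescoping the weakly Cauchy sequence into weakly null increments to preserve the diagonal lower bound --- is exactly the standard argument. The paper itself gives no proof here (it defers verbatim to \cite[Lemma~3.1]{Oktay21}), so there is nothing in-paper to contrast with; your write-up supplies precisely the expected argument, the only cosmetic quibble being that $\sigma_f$ (not $\tau_f$) is the analogue of $T_f$ under the passage to the opposite algebra, which is immaterial since all four conditions of the second block come out equivalent either way.
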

The proof of Lemma~\ref{lemma:wpL} is verbatim similar to the proof of \cite[Lemma~3.1]{Oktay21}. 

\begin{definition}\label{def:wpL-AE}\normalfont
Let $A$ be a Banach algebra and $E$ be a Banach left $A$-module.
\begin{enumerate}[label=\alph*.]
\item $\ap{A,E}$ (resp. $\wap{A,E}$) is the set of all $f\in\dual{E}$ such that $T_f$ is compact (resp. weakly compact),
\item $\wpL{A,E}$ is the set of all $f\in\dual{E}$ such that $T_f$ maps weakly precompact sets onto L-sets,
\item $\lcc{A,E}$ (resp. $\rcc{A,E}$) is the set of all $f\in\dual{E}$ such that $T_f$ (resp. $S_f$) is completely continuous,
\end{enumerate}
and $\cc{A,E} = \lcc{A,E}\cap\rcc{A,E}$. 
When $A=E$, we drop $E$ and simply write $\wpL{A}$, $\lcc{A}$, etc.
\end{definition}
Clearly, $\ap{A,E}\subseteq \cc{A,E} \subseteq \lcc{A,E}\cup\rcc{A,E} \subseteq\wpL{A,E}$.
If both $A$ and $E$ are reflexive as Banach spaces, then $\ap{A,E} = \cc{A,E} = \wpL{A,E}$.

\begin{proposition}\normalfont\label{prop:wpL-AE}
Let $A$ be a unital Banach algebra and $E$ be a Banach left $A$-module. 
If $\wpL{A}=\{0\}$ then $\wpL{A,E} = \{0\}$.
\end{proposition}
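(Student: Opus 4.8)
The plan is to reduce the module statement to the algebra statement by testing $f$ against the algebra elements that act on a fixed vector. Concretely, I would fix $x\in E$ and consider the functional $xf = S_f x \in \dual{A}$, given by $(xf)(a) = f(ax)$, and show that $xf \in \wpL{A}$. Since $\wpL{A} = \{0\}$ by hypothesis, this forces $xf = 0$ for every $x$, i.e. $f(ax) = 0$ for all $a \in A$ and $x \in E$. The unitality of $A$ then finishes the argument at once: taking $a = 1$ gives $f(x) = 0$ for every $x\in E$, so $f = 0$ and hence $\wpL{A,E} = \{0\}$.

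The core of the proof is the claim $xf \in \wpL{A}$, which I would verify through the sequential criterion of Lemma~\ref{lemma:wpL}(i) (with $E$ replaced by $A$). Let $(a_n)$ and $(b_n)$ be weakly null sequences in $A$; the goal is $(xf)(a_n b_n) \to 0$, that is, $f(a_n b_n x)\to 0$. Setting $y_n := b_n x$, the sequence $(y_n)$ lies in $E$ and is weakly null: the left action $a \mapsto ax$ is a bounded linear map $A \to E$, hence weak-to-weak continuous, and so it carries the weakly null sequence $(b_n)$ to a weakly null sequence. Now $(a_n)$ is weakly null in $A$ and $(y_n)$ is weakly null in $E$, so the assumption $f\in\wpL{A,E}$ together with Lemma~\ref{lemma:wpL}(i) gives $f(a_n y_n) = f(a_n b_n x)\to 0$, which is exactly $(xf)(a_n b_n)\to 0$. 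This establishes $xf\in\wpL{A}$.

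There is no serious obstacle here; the argument is essentially a bookkeeping exercise built on two facts. The first is the sequential reformulation in Lemma~\ref{lemma:wpL}, which lets me avoid working directly with the operators $T_f$ and the L-set condition. The second, and the only point that requires any care, is the identification of the product $a_n b_n x$ as $a_n(b_n x)$ together with the weak-to-weak continuity of the one-sided action, so that a product of two weakly null algebra sequences acting on $x$ is legitimately viewed as one weakly null algebra sequence paired with one weakly null module sequence. The role of the unit is confined to the final step, where it is used only to recover $f$ from the family $\{xf : x\in E\}$; without it one would obtain merely that $f$ annihilates the span of $AE$, so the unital hypothesis is genuinely needed in this formulation.
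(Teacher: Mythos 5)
Your proposal is correct and follows exactly the paper's route: the paper also reduces to showing $xf\in\wpL{A}$ for each $x\in E$ (a step it leaves as "not difficult to show" via Lemma~\ref{lemma:wpL}), concludes $f(AE)=\{0\}$, and uses unitality to get $f(E)=f(AE)=\{0\}$. Your write-up merely supplies the omitted verification that $(b_nx)$ is weakly null, which is the right justification.
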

\begin{proof}
Let $f\in\wpL{A,E}$. Then, it is not difficult to show that $xf\in\wpL{A}$ for every $x\in E$ by Lemma~\ref{lemma:wpL}. Consequently, $f(E) = f(AE) = \{0\}$. Thus, $f=0$.
\end{proof}
We could weaken the assumption that $A$ is unital. The conclusion of Proposition~\ref{prop:wpL-AE} remained intact if we merely assumed that the closure of $AE$ is equal to $E$.

\begin{lemma}\normalfont\label{lem:wpL-AE-cpt}
Let $A$ be a unital Banach algebra such that $\wpL{A}=\{0\}$, and let $E$ be a Banach $A$-bimodule. 
Then, no nonzero right multiplication operator $R_a:E\to E$ is compact.
\end{lemma}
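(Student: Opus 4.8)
The plan is to argue by contraposition: assuming that $R_a\colon E\to E$ is compact, I will show that $R_a=0$. Since $(R_a^{\ast}f)(x)=f(xa)=(af)(x)$ for all $x\in E$ and $f\in\dual{E}$, the adjoint is given by $R_a^{\ast}f=af$, so it suffices to prove that $af=0$ for every $f\in\dual{E}$; this forces $R_a^{\ast}=0$ and hence $R_a=0$. The single analytic input I would extract from compactness is complete continuity: if $(y_n)$ is weakly null in $E$, then $R_a$ carries it to a norm-null sequence, i.e.\ $\|y_na\|\to 0$.

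The heart of the argument is to show that $af\in\wpL{A,E}$ for each fixed $f\in\dual{E}$. By Definition~\ref{def:wpL-AE} this is condition~(iii) of Lemma~\ref{lemma:wpL}, so it is enough to verify the equivalent condition~(i): for weakly null sequences $(p_n)$ in $A$ and $(y_n)$ in $E$ one must have $(af)(p_ny_n)\to 0$. Here I would use the bimodule compatibility $(p_ny_n)a=p_n(y_na)$ to rewrite
\[
(af)(p_ny_n)=f\big((p_ny_n)a\big)=f\big(p_n(y_na)\big).
\]
Complete continuity gives $\|y_na\|\to 0$, while $(p_n)$ is bounded, so $\|p_n(y_na)\|\to 0$ and therefore $(af)(p_ny_n)\to 0$. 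Thus $af$ satisfies condition~(i), hence lies in $\wpL{A,E}$.

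Finally, since $A$ is unital and $\wpL{A}=\{0\}$, Proposition~\ref{prop:wpL-AE} yields $\wpL{A,E}=\{0\}$; consequently $af=0$ for every $f\in\dual{E}$, and the reduction in the first paragraph gives $R_a=0$.

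I expect the main obstacle to be organizing the computation so that the $a$ is multiplied on the right of the weakly null sequence $(y_n)$ rather than on the product $(p_ny_n)$: the naive attempt to verify the condition leaves one facing $f$ evaluated on a product of two weakly null sequences, which is exactly the obstruction that $\wpL{A,E}=\{0\}$ would be needed (circularly) to remove. The decisive point is that the bimodule identity $(p_ny_n)a=p_n(y_na)$ moves $a$ next to the $E$-factor, where compactness of $R_a$ can be applied, and that membership in $\wpL{A,E}$ is governed by the left-action conditions~(i)--(iii$'$) even though the computation also invokes the right action.
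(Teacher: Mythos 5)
Your proof is correct and follows essentially the same route as the paper: both arguments reduce the claim to showing $af\in\wpL{A,E}$ for every $f\in\dual{E}$ and then invoke Proposition~\ref{prop:wpL-AE} to conclude $af=0$, hence $R_a=0$. The only difference is cosmetic — the paper verifies membership via the factorization $S_{af}=S_fR_a$ (condition~(iii$'$) of Lemma~\ref{lemma:wpL}), whereas you check condition~(i) directly using the complete continuity of the compact operator $R_a$ and the identity $(p_ny_n)a=p_n(y_na)$.
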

\begin{proof}
$\wpL{A,E} = \{0\}$ by Proposition~\ref{prop:wpL-AE}. 
Suppose there exists $a\neq 0$ such that $R_a$ is compact. Since $A$ is unital, there exists $f\in\dual{A}$ such that $af\neq 0$ by Hahn-Banach theorem. Therefore, $S_{af} = S_fR_a$ is compact, i.e., $af\in\wpL{A,E}$. Contradiction. 
\end{proof}


\section{The Kernel of $\wpL{A}$}\label{sec:kerwpL}

\begin{definition}\normalfont\label{def:kerwpL}
Let $A$ be a reflexive Banach algebra. 
\begin{eqnarray}\label{eq:kerwpL}
\kerwpL{A} = \wpL{A}^{\perp} = \bigcap \{\ker{f}: f\in\wpL{A}\}
\end{eqnarray}
and $N\subseteq A$ is set of all $z\in A$ such that $x_ny_n\to z$ weakly for two weakly null sequences $(x_n)$, $(y_n)$ in $A$. 
\end{definition}

\begin{lemma}\normalfont\label{lem:N}
Let $A$ be a reflexive Banach algebra, and $N$ be defined as in Definition~\ref{def:kerwpL}.
For $l=1,\dots,n$ let $(x_k^l)_{k\in\N}$ be weakly null sequences such that $x_k^1\dots x_k^n \to z$ weakly. Then, $z\in N$.
\end{lemma}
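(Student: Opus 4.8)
The plan is to reduce the $n$-fold product back to the two-fold product that defines $N$, using reflexivity of $A$ to absorb the ``extra'' factors into a single weakly null sequence. The boundary cases are immediate: for $n=1$ a weakly null sequence has weak limit $z=0$, and $0\in N$ (take both defining sequences identically zero), while for $n=2$ the statement is precisely the definition of $N$. So I fix $n\ge 2$ and split the product into a head and a tail, setting $a_k = x_k^1$ and $b_k = x_k^2\cdots x_k^n$, so that $a_kb_k = x_k^1\cdots x_k^n \to z$ weakly.

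First I would record that each sequence $(x_k^l)$, being weakly convergent, is norm bounded; hence the tail product $(b_k)$ is a bounded sequence in $A$. The naive move is to read off ``$a_kb_k\to z$ with $(a_k)$, $(b_k)$ weakly null'' from the definition, but there is no reason for the tail $(b_k)$ to be weakly null, and this is exactly the obstacle. Reflexivity of $A$ resolves it: by the Eberlein--\v{S}mulian theorem the bounded sequence $(b_k)$ admits a subsequence $(b_{k_j})$ converging weakly to some $w\in A$. Passing to this subsequence leaves the hypothesis intact, so $a_{k_j}b_{k_j} = x_{k_j}^1\cdots x_{k_j}^n \to z$ weakly.

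The key step is the splitting $a_{k_j}b_{k_j} = a_{k_j}(b_{k_j}-w) + a_{k_j}w$. Since right multiplication $R_w\colon A\to A$ is a bounded linear operator, it is weak-to-weak continuous, so $a_{k_j}w = R_w(a_{k_j}) \to 0$ weakly because $(a_{k_j})$ is weakly null. Subtracting, I obtain $a_{k_j}(b_{k_j}-w)\to z$ weakly. Now both $(a_{k_j})$ and $(b_{k_j}-w)$ are weakly null sequences in $A$, and their pointwise product converges weakly to $z$; by the definition of $N$ this yields $z\in N$.

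The only genuine content is the splitting-off argument in the last paragraph, and I expect the sole obstacle to be precisely the non-weak-nullity of the tail product $b_k$, which is why reflexivity (to extract the weak limit $w$) together with the weak continuity of multiplication is essential. If one prefers an inductive presentation, the same device iterates: writing $x_k^1\cdots x_k^n = (x_k^1\cdots x_k^{n-1})\,x_k^n$ and absorbing the weak limit of the first factor reduces $n$ to $n-1$. However, the direct two-factor splitting above already settles the claim in a single step.
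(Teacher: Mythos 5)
Your proof is correct and rests on the same key device as the paper's: use reflexivity to extract a weak limit $w$ of the bounded tail product, then use the weak-to-weak continuity of the right multiplication $R_w$ to replace the tail by the weakly null sequence $b_{k_j}-w$, reducing to the defining two-factor case. The paper packages this as an induction on $n$, but as you note the two-factor splitting settles the general case in one step --- indeed the paper's inductive hypothesis (that the weak limit $z_2$ of the tail lies in $N$) is never actually used in its concluding line --- so your direct presentation is a mild streamlining of the same argument.
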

\begin{proof}
The proof is by induction. The base step $n=2$ is true by definition. Suppose the hypothesis is true for $n=m-1>2$. Let $(x_k^l)_{k\in\N}$, $l=1,\dots,m$ be weakly null sequences such that $x_k^1\dots x_k^m \to z\in A$ weakly. For brevity, let $w_k = x_k^2\dots x_k^{m}$. Since $A$ is reflexive, $(w_k)$ has a weakly convergent subsequence, say $w_{k_i}\to z_2$ weakly. By the intermediary step, $z_2\in N$. Clearly,
$\displaystyle (x_{k_i}^1(w_{k_i}-z_2) )$ is a product of two weakly null sequences, which converges weakly to $z$. Thus, $z\in N$.
\end{proof}

\begin{lemma}\normalfont\label{lem:kerwpL}
Let $A$ be a reflexive Banach algebra, and $N, \kerwpL{A}$ be defined as in Definition~\ref{def:kerwpL}. Then,
\begin{enumerate}[label=\alph*.,itemsep=0mm]
\item $AN\subseteq N$, $NA\subseteq N$. 
If $A$ is unital, then $AN=N=NA$. \label{itm:kerwpL_semigroup-ideal} 

\item $\kerwpL{A}$ is a closed two-sided ideal and $A/\kerwpL{A}$ has jwsc multiplication. 
\label{itm:kerwpL_ideal} 

\item $\wpL{A} = N^{\perp}$. 
Thus, $\kerwpL{A}$ is the closed linear span of $N$. \label{itm:kerwpL_annihilator} 

\item $\kerwpL{A} = \bigcap \{\ker{T_f}: f\in\wpL{A}\} = \bigcap \{\ker{S_f}: f\in\wpL{A}\}$ 
\label{itm:kerwpL_kerTf} 
\end{enumerate}
\end{lemma}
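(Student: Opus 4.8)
The plan is to prove the four items in the order (a), (c), (b), (d): the ideal and jwsc properties in (b) become transparent once (c) identifies $\kerwpL{A}$ with $\overline{\spn N}$, and (d) is a short consequence of the ideal structure. For (a) I would transport weakly null sequences through the multiplication operators. If $z = \textnormal{w-}\lim x_n y_n$ with $(x_n),(y_n)$ weakly null and $a\in A$, then $(ax_n)$ is again weakly null (as $L_a$ is bounded, hence weak-weak continuous) and $a(x_ny_n)=(ax_n)y_n \to az$ weakly; thus $az\in N$, so $AN\subseteq N$, and symmetrically, using $R_a$ and the factorization $(x_ny_n)a = x_n(y_na)$, we obtain $NA\subseteq N$. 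When $A$ is unital, $N = eN\subseteq AN\subseteq N$ and $N = Ne\subseteq NA\subseteq N$ force the equalities.

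For (c) the containment $\wpL{A}\subseteq N^{\perp}$ is immediate: for $f\in\wpL{A}$ and $z = \textnormal{w-}\lim x_ny_n\in N$, item (i) of Lemma~\ref{lemma:wpL} gives $f(z)=\lim f(x_ny_n)=0$. For the reverse I would argue by contradiction: if $f\in N^{\perp}\setminus\wpL{A}$, there are weakly null $(a_n),(x_n)$ and $\epsilon>0$ with $|f(a_nx_n)|\geq\epsilon$ along a subsequence; reflexivity extracts a further subsequence with $a_{n_k}x_{n_k}\to z$ weakly, whence $z\in N$ yet $|f(z)|\geq\epsilon$, contradicting $f\in N^{\perp}$. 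Hence $\wpL{A}=N^{\perp}$, and since $A$ is reflexive the bipolar theorem gives $\kerwpL{A}=\wpL{A}^{\perp}=(N^{\perp})^{\perp}=\overline{\spn N}$.

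Part (b) splits in two. That $\kerwpL{A}$ is a closed two-sided ideal follows from (a) and (c): $\spn N$ is a two-sided ideal since $A\cdot N\subseteq N$ and $N\cdot A\subseteq N$ extend by linearity, and continuity of multiplication passes this to the closure $\overline{\spn N}=\kerwpL{A}$. (Alternatively one checks directly that $fa,af\in\wpL{A}$ whenever $f\in\wpL{A}$, so that $\wpL{A}^{\perp}$ is a sub-bimodule.) For jwsc, after reducing through $\bar a_n\bar b_n=\bar a\bar b+\bar a\bar v_n+\bar u_n\bar b+\bar u_n\bar v_n$, with $\bar u_n=\bar a_n-\bar a\to0$ and $\bar v_n=\bar b_n-\bar b\to0$, to showing that products of weakly null sequences in $A/\kerwpL{A}$ are weakly null, I would exploit reflexivity of $A/\kerwpL{A}$ and work along subsequences. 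Lift $\bar u_n,\bar v_n$ to bounded $u_n,v_n\in A$, pass to a subsequence with $u_{n_k}\to u$, $v_{n_k}\to v$ and $u_{n_k}v_{n_k}\to w$ weakly, and note $u,v\in\kerwpL{A}$ (their images are the weak limits $0$ of $\bar u_n,\bar v_n$). Replacing $u_n,v_n$ by $u_n-u,\,v_n-v$ makes the corrected lifts weakly null; expanding the product, the three remaining terms lie in the ideal $\kerwpL{A}$ while the leading term has a weak limit in $N\subseteq\kerwpL{A}$. Thus $w\in\kerwpL{A}$ and $\bar u_{n_k}\bar v_{n_k}=q(u_{n_k}v_{n_k})\to q(w)=0$, as required.

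Finally, for (d) the inclusion $\kerwpL{A}\subseteq\bigcap_f\ker T_f$ is clear from (b): if $a\in\kerwpL{A}$ then $aA\subseteq\kerwpL{A}\subseteq\ker f$ for every $f\in\wpL{A}$, i.e.\ $T_fa=fa=0$. For the reverse, $\bigcap_f\ker T_f=\{a:aA\subseteq\kerwpL{A}\}$, so it remains to see that $aA\subseteq\kerwpL{A}$ forces $a\in\kerwpL{A}$; this is exactly where unitality enters, since then $a=a\cdot e\in aA\subseteq\kerwpL{A}$ (more generally a right approximate identity suffices, via $a\in\overline{aA}$), and the statement for $S_f$ is symmetric with $Aa$ and a left approximate identity. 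I expect this last inclusion to be the main obstacle: without some approximate identity the reverse containment can genuinely fail (for a finite-dimensional zero-product algebra one has $\kerwpL{A}=\{0\}$ while every $T_f$ vanishes), so the crux is to pin down the hypothesis guaranteeing that $A/\kerwpL{A}$ has no nonzero one-sided annihilators.
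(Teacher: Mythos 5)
Your treatment of (a) and (c) is correct and essentially identical to the paper's: weak--weak continuity of the multiplication operators for (a), and for (c) the same contrapositive subsequence extraction using reflexivity of the unit ball, followed by the bipolar identity $\kerwpL{A}=N^{\perp\perp}=\overline{\spn N}$. For (b) you take a genuinely different route. The paper obtains the ideal property directly from $fa,af\in\wpL{A}$ and then gets jwsc multiplication abstractly, citing \cite[Corollary 4.3]{UlgerDuncan92} to conclude $\dual{(A/\kerwpL{A})}=\ap{A/\kerwpL{A}}$, from which joint weak sequential continuity follows. Your lifting-and-subsequence argument proves the same thing from first principles and is correct (the one step worth spelling out is the closing ``every subsequence has a weakly null sub-subsequence'' reduction); it is longer, but it is self-contained, avoids the \"{U}lger--Duncan machinery, and makes visible exactly where $N\subseteq\kerwpL{A}$ and the reflexivity of $A$ enter.

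On (d) your diagnosis is the most valuable part of the proposal, because the obstacle you identify is real and it sits in the paper's own proof rather than in yours. The paper's argument rests on the chain $\kerwpL{A}\subseteq\ker{T_f}\subseteq\ker{f}$; the second inclusion asserts that $f(aA)=\{0\}$ forces $f(a)=0$, which is precisely the point you isolate and which requires $a\in\overline{aA}$ (unitality, or a suitable approximate identity). Your zero-product example --- where $N=\{0\}$, $\wpL{A}=\dual{A}$, $\kerwpL{A}=\{0\}$, yet $\ker{T_f}=A$ for every $f$ --- shows that part (d) as literally stated fails for a general reflexive $A$. Since the lemma is only ever invoked later under a unitality hypothesis (as in Theorem~\ref{thm:fincodim-leftmaxideal}), the repair you propose is exactly the right one; just be explicit that you are proving (d) under that added hypothesis, not as stated.
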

\begin{proof}
\ref{itm:kerwpL_semigroup-ideal} Let $a\in A$ and $z\in N$. Let $(x_n)$, $(y_n)$ be two weakly null sequences in $A$ such that $x_ny_n\to z$ weakly. Clearly, $(ax_n)$ is weakly null and $ax_ny_n\to az$ weakly. Thus, $az\in N$. Similarly, $za\in N$.

\ref{itm:kerwpL_ideal} Let $x\in A$. $fx, xf\in\wpL{A}$ for every $f\in\wpL{A}$. Thus,\\ $f(x\kerwpL{A}) = fx(\kerwpL{A})=\{0\}$ and $f(\kerwpL{A}x) = xf(\kerwpL{A}) = \{0\}$ for all $f\in\wpL{A}$. Hence, $x\kerwpL{A} \cup \kerwpL{A}x\subseteq \kerwpL{A}$.

Second, since $\kerwpL{A}$ is a two-sided ideal, then
$\ap{A/\kerwpL{A}} = \ap{A}\cap \kerwpL{A}^{\perp} = \ap{A}$ by \cite[Corollary 4.3]{UlgerDuncan92}.
Thus, $\dual{(A/\kerwpL{A})} = \ap{A/\kerwpL{A}}$.

\ref{itm:kerwpL_annihilator} $\wpL{A} \subseteq N^{\perp}$ by \cite[Lemma~3.1]{Oktay21}.
Conversely, if $f\notin\wpL{A}$, then there exist $r>0$, weakly null sequences $(x_n)$, $(y_n)$ such that $|f(x_ny_n)|\geq r$ for all $n\in\N$. By the weak compactness of the unit ball, there is a subsequence $(x_{n_k}y_{n_k})$ weakly converging to some $z\in N$. Thus, $|f(z)|\geq r$ so $f(N)\neq \{0\}$.

Second, $\kerwpL{A} = \wpL{A}^{\perp} = N^{\perp\perp}$. Thus, $\kerwpL{A}$ is the closed linear span of $N$.

\ref{itm:kerwpL_kerTf} 
$\kerwpL{A}\subseteq\ker{T_f}\subseteq\ker{f}$ for any $f\in\wpL{A}$. In fact, for any $x\in \kerwpL{A}$
$$ fx(A) = f(xA) \subseteq f(\kerwpL{A}) = \{0\}$$
Consequently, $\kerwpL{A}\subseteq \bigcap \{\ker{T_f}: f\in\wpL{A}\} \subseteq \bigcap \{\ker{f}: f\in\wpL{A}\} = \kerwpL{A}$. 
Second equality is obtained similarly.
\end{proof}

\begin{theorem}\normalfont\label{thm:fincodim-leftmaxideal}
Let $A$ be a reflexive unital Banach algebra, 
and $N$ be defined as in Definition~\ref{def:kerwpL}.
Let $I$ be a maximal (closed) left ideal 
and $P$ be a primitive ideal of $A$. Then, 
\begin{enumerate}[label=\alph*.,itemsep=0mm]
\item there exists $f\in\dual{A}$ such that $I=\ker{S_f}$ and $S_f(A)$ is closed in $\dual{A}$. 
\item $I\supseteq N$ if and only if $I$ has finite codimension.
\item $P\supseteq N$ if and only if $P$ has finite codimension. 
\end{enumerate}
\end{theorem}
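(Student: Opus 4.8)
The plan is to prove (a) first and read off (b) and (c) from it; the whole difficulty is concentrated in the closed-range assertion of (a). First I would note that, because $A$ is unital, a maximal closed left ideal $I$ is automatically maximal among \emph{all} left ideals: if $I\subsetneq J\subseteq A$ with $J$ a proper left ideal, then $\overline J$ is again proper (a closed left ideal containing $1$ meets the invertibles and equals $A$), so $\overline J$ is a proper closed left ideal strictly containing $I$, contradicting maximality. Hence $D:=A/I$ is an \emph{algebraically} simple Banach left $A$-module. Fixing the quotient map $q:A\to D$ and any $0\neq\phi\in\dual D$, I would set $f=\phi\circ q\in\dual A$. Then $\ker S_f=\{x\in A: f(Ax)=\{0\}\}=\{x:\phi(A\,q(x))=\{0\}\}$; since $A\,q(x)$ is a submodule of $D$, simplicity forces it to be $\{0\}$ (i.e.\ $x\in I$) or all of $D$ (whence $\phi(A\,q(x))\neq\{0\}$). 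Thus $\ker S_f=I$ for every nonzero $\phi$.

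To finish (a) I must choose $\phi$ so that $S_f(A)$ is closed. Let $\tilde S_f:D\to\dual A$ be the induced injection, $\tilde S_f(q(x))=xf$. Closedness of $S_f(A)$ is equivalent to $\tilde S_f$ being bounded below, and (using reflexivity and the closed-range theorem, together with $\dual{S_f}=T_f$) equivalent to the right-module orbit $\phi A=\{\phi a:a\in A\}$ being all of $\dual D$; the orbit is always norm-dense because $D$ is simple, so what is at issue is its closedness. \textbf{This orbit-fullness is the main obstacle.} I would attack it by proving a uniform lower bound $\inf_{\|\xi\|=1}\sup_{\|a\|\le1}|\phi(a\xi)|>0$ (which is exactly $\|\tilde S_f\xi\|\ge c\|\xi\|$) for a suitable $\phi$: each orbit map $\xi\mapsto a\xi$ is surjective by simplicity, hence open, and reflexivity — through weak compactness of the unit balls of $A$ and $D$ — is the ingredient I expect to keep the surjectivity moduli from collapsing along the unit sphere of $D$. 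Equivalently, one shows the algebraically irreducible submodule $S_f(A)\cong D$ of the reflexive module $\dual A$ is complete in the subspace norm, after which uniqueness of the norm on an algebraically irreducible Banach module makes $\tilde S_f$ a topological isomorphism onto its range. This is the step where reflexivity is indispensable.

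Granting (a), the elementary directions of (b) and (c) are short. If $P$ is a closed two-sided ideal of finite codimension, the homomorphism $A\to A/P$ onto a finite-dimensional algebra carries weakly null sequences to norm-null ones, so products of two weakly null sequences tend to $0$ in $A/P$; every weak limit of such products therefore lies in $P$, giving $N\subseteq P$. If $I$ is a maximal left ideal of finite codimension, its core $P=\{a:aA\subseteq I\}=\mathrm{ann}(A/I)$ is primitive, satisfies $P\subseteq I$ (as $1\in A$), and $A/P$ embeds in $\mathcal B(A/I)$, hence is finite-dimensional; so by the previous case $N\subseteq P\subseteq I$. For the converse of (c) I would reduce to (b): writing a primitive $P\supseteq N$ as $P=\mathrm{ann}(A/I)$ for a maximal left ideal $I$ gives $N\subseteq P\subseteq I$, so (b) yields $\dim(A/I)<\infty$, and then $A/P\hookrightarrow\mathcal B(A/I)$ is finite-dimensional.

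The hard direction of (b) then falls straight out of (a). Assume $N\subseteq I$ and take $f$ as in (a), so $I=\ker S_f$ with $S_f(A)$ closed. For $z\in N\subseteq\ker S_f$ we have $zf=0$, hence $f(z)=(zf)(1)=0$; thus $f\in N^{\perp}=\wpL A$ by Lemma~\ref{lem:kerwpL}. As $A$ is reflexive, $\wpL A=\cc A\subseteq\rcc A$, so $S_f$ is completely continuous, and completely continuous operators on reflexive spaces are compact. A compact operator with closed range has finite-dimensional range, so $\dim(A/I)=\dim S_f(A)<\infty$, i.e.\ $I$ has finite codimension. Together with the reduction in the previous paragraph this also finishes (c).
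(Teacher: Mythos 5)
Your kernel computation in (a) and your treatment of (b) and (c) track the paper's own argument closely: the forward direction of (b) ($N\subseteq I$ forces $f\in N^{\perp}=\wpL{A}=\ap{A}$, so $S_f$ is compact with closed range, hence of finite rank) is exactly the paper's proof, and your route through the finite-dimensional quotient $A/P$ for the converse directions is a clean variant of what the paper does. The genuine problem is the closed-range assertion in (a), which you correctly single out as the crux and then do not prove. Your reformulations (closedness of $S_f(A)$ $\Leftrightarrow$ $\tilde S_f$ bounded below $\Leftrightarrow$ $\phi A=\dual{D}$) are fine, but the paragraph meant to establish them consists of expectations rather than arguments: ``reflexivity \dots is the ingredient I expect to keep the surjectivity moduli from collapsing'' produces no uniform lower bound, and openness of each individual orbit map $\xi\mapsto a\xi$ gives nothing uniform over the unit sphere of $D$. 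The fallback you offer is circular: uniqueness of the norm on an algebraically irreducible Banach module compares two \emph{complete} module norms, so to invoke it you must already know that $S_f(A)$ is complete in the subspace norm of $\dual{A}$ --- which is precisely the closedness you are trying to prove. Since the substantive implication of (b), and through it of (c), rests entirely on $S_f$ having closed range, the gap propagates through the whole theorem.

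For comparison, the paper closes this step by passing to $X=\overline{S_f(A)}\subseteq\dual{A}$, observing that $X$ is reflexive and that $\pi(a)(xf)=axf$ defines an irreducible representation of $A$ on $X$, and then invoking the structure theory of irreducible representations (Palmer, Theorem 4.2.21) to identify $X$ with $S_f(A)\simeq A/I$. The content of that step is that the closure $X$ is still \emph{algebraically} irreducible, so that the orbit $Af$ of the element $f=S_f(1)\in X$ is all of $X$; mere norm-density of the orbit $\phi A$ in $\dual{D}$ (equivalently, of $S_f(A)$ in $X$), which is all your simplicity argument yields, does not suffice. That is the statement you would need to supply to complete your version of (a).
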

\begin{proof}
a. There exists a nonzero $f\in\dual{A}$ such that $I\subseteq\ker{f}$ by Hahn-Banach theorem. 
Since $I$ is a left ideal, then $I\subseteq\ker{S_f}$. 
Thus, $I=\ker{S_f}$ by maximality.

Let $X=\overline{S_f(A)}$ in $\dual{A}$. Clearly, $X$ is reflexive and the map $\pi:A\to B(X)$ defined by $\pi(a)xf = axf$ is an irreducible representation. Thus, $X=S_f(A) \simeq A/I$, and $\pi$ is equivalent to the reduction of the left regular representation of $A$ on $A/I$, e.g., by \cite[Theorem 4.2.21]{Palmer1}. 

b. If $I\supseteq N$, then $\kerwpL{A}\subseteq\ker{f}$, so $f\in\wpL{A} = \ap{A}$. Thus, $S_f$ is a finite dimensional operator, being compact and having a closed range.

Conversely, if $I$ is finite codimensional, then there exists $f\in\dual{A}$ such that $I=\ker{f}$. Thus, $I=\ker{S_f}$ and $S_f$ is a finite dimensional operator. Hence, $f\in\ap{A}=\wpL{A}$ and so $N\subseteq\ker{S_f} = I$ by Lemma~\ref{lem:kerwpL}.

c. Every primitive ideal is of the form $\{a\in A: aA\subseteq I\}$ for some maximal left ideal $I$, and in this case, $P$ is the largest two-sided ideal contained in $I$, e.g., see \cite[Theorem 4.1.8]{Palmer1}.

If $N\subseteq P=\{a\in A: aA\subseteq I\}\subseteq I$, then the maximal left ideal $I$ has finite codimension by (b). Clearly, $P = \ker{\pi}$ where $\pi:A\to B(A/I)$ is the finite dimensional representation defined by $\pi(a)(x+I) = ax+I$. Thus, $P$ has finite codimension.

Conversely, if $P$ has finite codimension, so does $I$. Thus, $I\supseteq\kerwpL{A}\supseteq N$. Since $P$ is the largest ideal contained in $I$, then $P\supseteq \kerwpL{A}\supseteq N$. 
\end{proof}

\begin{corollary}\normalfont\label{cor:fincodim-leftmaxideal}
Let $A$ be a reflexive unital Banach algebra, and $N$ be defined as in Definition~\ref{def:kerwpL}. 
Then, the following are equivalent.
\begin{enumerate}[label=\roman*.,itemsep=0mm]
\item Every irreducible representation is finite-dimensional.
\item Every primitive ideal has finite codimension.
\item Every maximal left ideal has finite codimension.
\item $N\subseteq\jacobsonradical{A}$.
\end{enumerate}
If $A$ is also amenable, then (i-iv) is equivalent to
\begin{enumerate}
\item[v.] $A$ is finite-dimensional and semisimple.
\end{enumerate}
\end{corollary}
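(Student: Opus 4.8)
The plan is to close the cycle (i)$\Leftrightarrow$(ii)$\Leftrightarrow$(iii)$\Leftrightarrow$(iv) using Theorem~\ref{thm:fincodim-leftmaxideal} together with the two standard descriptions of the radical, and then to treat the amenable equivalence with (v) separately. First I would record that, because $A$ is unital, every maximal left ideal is modular, so $\jacobsonradical{A}$ is simultaneously the intersection of all primitive ideals and the intersection of all maximal left ideals. Hence $N\subseteq\jacobsonradical{A}$ holds if and only if $N$ is contained in every primitive ideal, and equally if and only if $N$ is contained in every maximal left ideal. Feeding these two containments into Theorem~\ref{thm:fincodim-leftmaxideal}(c) and (b) respectively gives (iv)$\Leftrightarrow$(ii) and (iv)$\Leftrightarrow$(iii), since a primitive (resp.\ maximal left) ideal contains $N$ exactly when it has finite codimension.

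For (i)$\Leftrightarrow$(ii) I would invoke the correspondence between irreducible representations and primitive ideals: the kernel of an irreducible representation is primitive, and every primitive ideal arises as such a kernel. If every irreducible representation is finite-dimensional, then for a primitive $P=\ker\pi$ the image $\pi(A)\cong A/P$ is finite-dimensional, which is (ii). Conversely, if $P=\ker\pi$ has finite codimension, then $A/P$ is finite-dimensional and acts faithfully and irreducibly on the representation space $X$; choosing $0\neq x\in X$, the orbit $(A/P)x$ is a nonzero finite-dimensional, hence closed, invariant subspace, so irreducibility forces $(A/P)x=X$ and $X$ is finite-dimensional. This gives (i).

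For the amenable part, (v)$\Rightarrow$(i) is immediate, since an irreducible representation of a finite-dimensional algebra acts on a finite-dimensional space (same orbit argument). The substance is (iv)$\Rightarrow$(v), where I would use the decomposition of a reflexive amenable Banach algebra as a direct sum of two-sided ideals $A=A_1\oplus A_2$ with $A_1$ finite-dimensional and $\ap{A_2}=\{0\}$, described in the introduction. Each summand is unital and reflexive and carries its own analogue $N_{A_i}$ of the set $N$. Since a sequence weakly null in $A_2$ is weakly null in $A$, one has $N_{A_2}\subseteq N$; assuming (iv) then yields $N_{A_2}\subseteq N\cap A_2\subseteq\jacobsonradical{A}\cap A_2=\jacobsonradical{A_2}$. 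On the other hand reflexivity of $A_2$ gives $\ap{A_2}=\wpL{A_2}=N_{A_2}^{\perp}$ by Lemma~\ref{lem:kerwpL}(c), so $\ap{A_2}=\{0\}$ forces $\overline{\spn}\,N_{A_2}=A_2$. Combining these, $A_2=\overline{\spn}\,N_{A_2}\subseteq\jacobsonradical{A_2}\subseteq A_2$, whence $\jacobsonradical{A_2}=A_2$; but a unital algebra has proper radical unless it is trivial, so $A_2=\{0\}$ and $A=A_1$ is finite-dimensional. A finite-dimensional amenable Banach algebra is semisimple, which secures (v).

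The step I expect to be the main obstacle is (iv)$\Rightarrow$(v): it is not self-contained within the statements preceding the corollary, since it rests on the ideal decomposition of reflexive amenable algebras recalled above. The decisive and slightly delicate point is the observation that (iv) forces the ``infinite-dimensional'' summand $A_2$ to satisfy $\jacobsonradical{A_2}=A_2$, which is incompatible with $A_2$ being unital; the remainder is routine bookkeeping with the radical of a direct summand and with the identity $\wpL{A_2}=N_{A_2}^{\perp}$ in the reflexive setting.
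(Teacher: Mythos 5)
Your proof is correct, and the chain (ii)$\Leftrightarrow$(iii)$\Leftrightarrow$(iv) via the two descriptions of $\jacobsonradical{A}$ in a unital algebra together with Theorem~\ref{thm:fincodim-leftmaxideal}(b),(c) is exactly what the paper does; you also supply the standard (i)$\Leftrightarrow$(ii) correspondence explicitly, which the paper leaves tacit. The genuine divergence is in the amenable part. The paper disposes of it in one line by citing \cite[Corollary 2.3]{GRW92} for (i)$\Leftrightarrow$(v), whereas you derive (iv)$\Rightarrow$(v) from the direct-sum decomposition $A=A_1\oplus A_2$ with $A_1$ finite-dimensional and $\ap{A_2}=\{0\}$. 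Be aware that in this paper that decomposition is not a standing fact but is Theorem~\ref{thm:main-raBa}, which appears \emph{after} this corollary and whose proof itself invokes \cite[Corollary 2.3]{GRW92}; so your route does not actually avoid the external result, it only reaches it through a forward reference. This is not circular --- Theorem~\ref{thm:main-raBa} nowhere uses the present corollary --- but it does mean your argument for (iv)$\Rightarrow$(v) cannot stand where the corollary is placed without reordering. What your version buys is some insight: it makes visible that (iv) forces the ``infinite-dimensional'' summand $A_2$ to satisfy $A_2=\overline{\spn}\,N_{A_2}\subseteq\jacobsonradical{A_2}$, which is incompatible with $A_2$ being unital, and your intermediate steps ($N_{A_2}\subseteq N$, $\jacobsonradical{A}\cap A_2=\jacobsonradical{A_2}$, closedness of the radical, and semisimplicity of finite-dimensional amenable algebras) are all sound. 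The paper's citation is shorter and keeps the logical order clean; yours is more self-explanatory but should either be deferred or rewritten to quote \cite{GRW92} directly.
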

\begin{proof}
$(\textrm{i}\Leftrightarrow\textrm{v})$ is \cite[Corollary 2.3]{GRW92}.
\hspace{10mm}
$(\textrm{ii}\Leftrightarrow\textrm{iv}\Leftrightarrow\textrm{iii})$ by Theorem~\ref{thm:fincodim-leftmaxideal}.
\end{proof}

\begin{theorem}\normalfont\label{thm:main-raBa}
Suppose $A$ is a reflexive amenable Banach algebra. 
Let $N, \kerwpL{A}$ be defined as in Definition~\ref{def:kerwpL}. 
Then,
\begin{enumerate}[label=\alph*.,itemsep=0mm]
\item $A/\kerwpL{A}$ is semisimple and finite-dimensional. \label{itm:raBa_quotient} 
\item $\wpL{A}$ is a finite dimensional subspace of $\dual{A}$. \label{itm:raBa_wpL} 
\item The hull $\hull{N}$ is a finite set, and every $P\in\hull{N}$ is of the form $P=Az$ for some central idempotent $z\in A$. \label{itm:raBa_quotient_hull}
\item $\kerwpL{A}$ has finite codimension, and $\kerwpL{A}=A{{e}}$ for some central idempotent ${{e}}\in A$. \label{itm:raBa_M1}
\item $\kerwpL{A}$ is a reflexive amenable Banach algebra such that 
$\wpL{\kerwpL{A}} = \{0\}$. \label{itm:raBa_M2}
\end{enumerate}
\end{theorem}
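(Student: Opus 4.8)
The plan is to push everything down to the finite-dimensional quotient $A/\kerwpL{A}$ and then transport the structure back into $A$ by means of a single central idempotent. Before anything else I would record that $A$ is unital: amenability supplies a bounded two-sided approximate identity $(e_\alpha)$, and since $A$ is reflexive this bounded net has a weak cluster point $e$; passing to a subnet and using that $R_a$ and $L_a$ are weak--weak continuous, the relations $e_\alpha a\to a$ and $ae_\alpha\to a$ force $ea=a=ae$, so $e$ is an identity. This unlocks Theorem~\ref{thm:fincodim-leftmaxideal} and Corollary~\ref{cor:fincodim-leftmaxideal}. For \ref{itm:raBa_quotient}, set $B=A/\kerwpL{A}$, which is reflexive (quotient of a reflexive space), unital, and amenable (quotient of an amenable algebra). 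By Lemma~\ref{lem:kerwpL}\ref{itm:kerwpL_ideal} the multiplication of $B$ is jointly weakly sequentially continuous, so the analogue $N_B$ of $N$ for $B$ satisfies $N_B=\{0\}$; in particular $N_B\subseteq\jacobsonradical{B}$, and Corollary~\ref{cor:fincodim-leftmaxideal}\,(iv)$\Rightarrow$(v) shows $B$ is finite-dimensional and semisimple. Part \ref{itm:raBa_wpL} is then immediate: by Lemma~\ref{lem:kerwpL}\ref{itm:kerwpL_annihilator}, $\wpL{A}=N^{\perp}=\kerwpL{A}^{\perp}\cong\dual B$, which is finite-dimensional by \ref{itm:raBa_quotient}.

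Part \ref{itm:raBa_M1} is where the real work lies, and I expect it to be the main obstacle. Since $B$ is finite-dimensional, $\kerwpL{A}$ is finite-codimensional, so $\kerwpL{A}^{\perp}\cong\dual B$ is a finite-dimensional, hence complemented, subspace of $\dual A$; that is, $\kerwpL{A}$ is weakly complemented. The key external input is the standard fact that a weakly complemented closed ideal of an amenable Banach algebra has a bounded approximate identity (see \cite{Runde02}). Now I reuse the reflexivity trick: a weak cluster point $e$ of such an approximate identity of $\kerwpL{A}$ is an identity for $\kerwpL{A}$, and since for every $a\in A$ both $ae$ and $ea$ lie in $\kerwpL{A}$ we get $ae=e(ae)=eae=(ea)e=ea$, so $e$ is central and idempotent with $\kerwpL{A}=Ae=eA$; finite codimension is already known from \ref{itm:raBa_quotient}.

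With \ref{itm:raBa_M1} in hand, \ref{itm:raBa_quotient_hull} is bookkeeping. Write $A=Ae\oplus A(1-e)$ with $A(1-e)\cong B$ finite-dimensional and semisimple, and decompose $A(1-e)=\bigoplus_i B_i$ into its simple summands. Since $\kerwpL{A}$ is the closed ideal generated by $N$, one has $\hull{N}=\hull{\kerwpL{A}}$, and the primitive ideals of $A$ containing $\kerwpL{A}$ correspond to the finitely many primitive ideals of $B$; this gives finiteness of $\hull{N}$. Each such ideal is $Ae\oplus A(1-e)w_i$, where $w_i$ is the central idempotent of $A(1-e)$ obtained by omitting the identity of $B_i$; as $e$ and $w_i$ are orthogonal central idempotents, $z=e+w_i$ is a central idempotent with $P=Az$.

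Finally, for \ref{itm:raBa_M2}, $\kerwpL{A}=Ae$ is reflexive as a closed subspace and amenable because $A$ splits along the central idempotent $e$ as the direct product of algebras $Ae$ and $A(1-e)$. The one genuinely new point is $\wpL{\kerwpL{A}}=\{0\}$, which I would prove by identifying the set $N_{\kerwpL{A}}$ of weak limits of products of weakly null sequences inside $\kerwpL{A}$ with $N$ itself. Any such product lies in $\kerwpL{A}$ and, since the weak topology of the closed subspace $\kerwpL{A}$ is the restriction of that of $A$, is a weak limit of products of weakly null sequences of $A$, so $N_{\kerwpL{A}}\subseteq N$. Conversely, given $z=w\text{-}\lim x_ny_n\in N$, applying the weak--weak continuous map $u\mapsto eue$ and using $eze=z$ shows $z=w\text{-}\lim (ex_n)(y_ne)$ with $(ex_n),(y_ne)$ weakly null in $\kerwpL{A}$, so $N\subseteq N_{\kerwpL{A}}$. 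Hence $\overline{\spn}\,N_{\kerwpL{A}}=\overline{\spn}\,N=\kerwpL{A}$ by Lemma~\ref{lem:kerwpL}\ref{itm:kerwpL_annihilator}, and applying that lemma to the reflexive algebra $\kerwpL{A}$ gives $\wpL{\kerwpL{A}}=N_{\kerwpL{A}}^{\perp}=\{0\}$.
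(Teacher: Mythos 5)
Your proof is correct and follows the same overall strategy as the paper --- reduce to the finite-dimensional semisimple quotient $A/\kerwpL{A}$ via \cite[Corollary 2.3]{GRW92} and transport the structure back through a central idempotent --- but it inverts the logical order of parts (c) and (d) and produces the idempotent by a different mechanism. The paper first shows that each $P\in\hull{N}$ has finite codimension (Theorem~\ref{thm:fincodim-leftmaxideal}), hence is complemented and of the form $Az$ for a central idempotent $z$, and only then uses semisimplicity of $A/\kerwpL{A}$ to write $\kerwpL{A}=\bigcap_k Az_k=Az_1\cdots z_n$; you instead apply the weak-complementation criterion for ideals of amenable algebras directly to $\kerwpL{A}$ (finite codimension $\Rightarrow$ weakly complemented $\Rightarrow$ bounded approximate identity $\Rightarrow$, by reflexivity, an identity $e$, with $ae=eae=ea$ giving centrality), and then read off $\hull{N}$ from the Wedderburn decomposition of $A(1-e)$. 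Your route is slightly more economical in that it does not need semisimplicity of the quotient to manufacture $e$, while the paper's makes (c) independent of (d); the two are otherwise the same idea, since the paper's ``complemented, thus has an identity'' step for each $P$ rests on exactly the same amenability-plus-reflexivity argument you spell out. Your derivation of (a) via $N_{A/\kerwpL{A}}=\{0\}$ and Corollary~\ref{cor:fincodim-leftmaxideal}\,(iv)$\Rightarrow$(v) is a mild repackaging of the paper's appeal to finite-codimensional primitive ideals, resting on the same input from \cite{GRW92}; parts (b) and (e) match the paper essentially verbatim (the paper uses $x_ne,y_ne$ where you use $ex_n,y_ne$ --- both work since $e$ is central). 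Making the unitality of $A$ explicit at the outset is a useful addition, since the paper needs it to invoke Theorem~\ref{thm:fincodim-leftmaxideal} but records it only in a remark after Theorem~\ref{thm:raBa-wpL0}.
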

\begin{proof}
\ref{itm:raBa_quotient} By \cite[Corollary 2.3]{GRW92} since $A/\kerwpL{A}$ is a reflexive amenable Banach algebra such that every primitive ideal has finite codimension.

\ref{itm:raBa_wpL} Clearly $\dual{(A/\kerwpL{A})} = \wpL{A}$ and $\dual{\kerwpL{A}} = \dual{A}/\wpL{A}$.
%
%
Since $A/\kerwpL{A}$ is finite dimensional, so is $\wpL{A}$.

\ref{itm:raBa_quotient_hull} $\hull{N}$ is finite by \eqref{itm:raBa_quotient}. Each $P\in\hull{N}$ has finite codimension, so complemented in $A$. Thus, $P$ has an identity, say $z\in P$. 
$zx = zxz = xz$ for every $x\in A$, i.e., $z$ is a central idempotent.
Thus, $P = Az$.

\ref{itm:raBa_M1} Write $\hull{N}=\{Az_1,\dots,Az_n\}$. Since $A/\kerwpL{A}$ is semisimple, $$\kerwpL{A} = \bigcap_{k=1}^n Az_k = Az_1z_2\dots z_n.$$ Clearly, ${{e}}=z_1z_2\dots z_n$ is a central idempotent.

\ref{itm:raBa_M2} Clearly $\kerwpL{A}$ is reflexive. 
Being an ideal, $\kerwpL{A}$ is amenable if and only if it has a bounded approximate identity. 
Clearly, ${{e}}(x{{e}}) = (x{{e}}){{e}} = x{{e}}$ for every $x\in A$. Thus, ${{e}}$ is the identity of $\kerwpL{A}$, and $\kerwpL{A}$ is amenable.

Third, given $w\in N$, let $(x_n),(y_n)$ be weakly null sequences in $A$ such that $x_ny_n\to w$ weakly. Then, $(x_n{{e}}),(y_n{{e}})$ are weakly null sequences in $\kerwpL{A}$ such that \hbox{$x_n{{e}}y_n{{e}} = x_ny_n{{e}} \to w{{e}} = w$} weakly. 
Thus, $f\in\wpL{\kerwpL{A}}$ if and only if $N\subseteq\ker{f}$ if and only if $f=0$. 
\end{proof}

If $\kerwpL{A}$ in Theorem~\ref{thm:main-raBa} is finite dimensional, then $\kerwpL{A}=\{0\}$, which is the case precisely when the central idempotent ${{e}}=0$. In this case, $A$ is semisimple and finite dimensional. On the other hand, if $\kerwpL{A}\neq\{0\}$, then both $\kerwpL{A}$ and $A$ are infinite dimensional.
Consequently, 
\begin{corollary}\normalfont\label{cor:conjecture}
The following two statements are equivalent.
\begin{enumerate}[label=\roman*.,itemsep=0mm]
\item Every reflexive amenable Banach algebra is finite dimensional.
\item If $A$ is a reflexive amenable Banach algebra for which $\wpL{A} = \{0\}$, then $A=\{0\}$. 
\end{enumerate}
\end{corollary}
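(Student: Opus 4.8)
The plan is to read both implications straight off Theorem~\ref{thm:main-raBa}, so that the corollary is essentially formal once the theorem is invoked in the right place.

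\emph{(i)$\Rightarrow$(ii).} Suppose (i) holds and let $A$ be a reflexive amenable Banach algebra with $\wpL{A}=\{0\}$. By (i) the algebra $A$ is finite dimensional. On a finite dimensional space every bounded operator is compact, so $T_f$ is compact for each $f\in\dual{A}$; since $A$ is reflexive this gives $\dual{A}=\ap{A}=\wpL{A}$, using the equality $\ap{A}=\wpL{A}$ recorded before Proposition~\ref{prop:wpL-AE}. Combined with $\wpL{A}=\{0\}$ this forces $\dual{A}=\{0\}$, hence $A=\{0\}$, which is exactly the conclusion of (ii).

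\emph{(ii)$\Rightarrow$(i).} Suppose (ii) holds and let $A$ be an arbitrary reflexive amenable Banach algebra. By part~\ref{itm:raBa_M2} of Theorem~\ref{thm:main-raBa}, $\kerwpL{A}$ is again a reflexive amenable Banach algebra and satisfies $\wpL{\kerwpL{A}}=\{0\}$. Applying hypothesis (ii) to the algebra $\kerwpL{A}$ yields $\kerwpL{A}=\{0\}$. But part~\ref{itm:raBa_M1} of Theorem~\ref{thm:main-raBa} asserts that $\kerwpL{A}$ has finite codimension in $A$; with $\kerwpL{A}=\{0\}$ this says precisely that $A$ is finite dimensional, giving (i).

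The substantive work is entirely contained in Theorem~\ref{thm:main-raBa}, so there is no genuine analytic obstacle remaining in the corollary. The one point I would check most carefully is the self-referential move in (ii)$\Rightarrow$(i): hypothesis (ii) is applied not to $A$ itself but to the ideal $\kerwpL{A}$, and this is legitimate only because part~\ref{itm:raBa_M2} of Theorem~\ref{thm:main-raBa} guarantees that $\kerwpL{A}$ inherits both reflexivity and amenability while simultaneously forcing $\wpL{\kerwpL{A}}=\{0\}$. Confirming that all three hypotheses of (ii) genuinely transfer to the subalgebra $\kerwpL{A}$ — rather than merely that $\kerwpL{A}$ is a closed two-sided ideal — is the step on which the whole argument hinges.
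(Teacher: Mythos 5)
Your proof is correct and follows essentially the same route as the paper: the forward direction is the observation that a finite-dimensional reflexive algebra has $\dual{A}=\ap{A}=\wpL{A}$, and the reverse direction applies hypothesis (ii) to the ideal $\kerwpL{A}$ via parts \ref{itm:raBa_M1} and \ref{itm:raBa_M2} of Theorem~\ref{thm:main-raBa}, exactly as in the remark preceding the corollary. No gaps.
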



\section{Some Implications of $\wpL{A}=\{0\}$}\label{sec:wpL0}

\begin{proposition}\normalfont\label{lem:N-wpL0}
Let $A$ be a reflexive unital Banach algebra such that \hbox{$\wpL{A}=\{0\}$}, and $N$ be defined as in Definition~\ref{def:kerwpL}.
Then, there exist $m\in\N$ and a finite subset $F=\{z_1,\dots,z_m\}\subset N$ satisfying the following.
\begin{enumerate}[label=\alph*.]
\item $1 = z_1+\dots+z_m$, thus $A=N+\dots+N$ the sum of $m$ copies of $N$.
\item The sum of $m-1$ copies of $N$ is not dense in $A$ when $m>1$. 
\item $F$ is a linearly independent set. 
\end{enumerate}
\end{proposition}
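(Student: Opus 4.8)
The plan is to reduce all three parts to a single observation: in a unital Banach algebra every element sufficiently close to $1$ is invertible, and such an invertible element can be used to convert an \emph{approximate} representation by elements of $N$ into an \emph{exact} one. This one device powers all of (a), (b), and (c).

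First I would record the elementary structural facts about $N$. From $x_ny_n\to z$ weakly with $(x_n),(y_n)$ weakly null one reads off $0\in N$, $N=-N$, and $\lambda N=N$ for every scalar $\lambda\neq 0$ (since $(\lambda x_n)$ is again weakly null). Writing $N_k$ for the sum of $k$ copies of $N$, scale invariance gives $\spn N=\bigcup_{k}N_k$, an increasing union because $0\in N$. Moreover $\wpL{A}=\{0\}$ together with Lemma~\ref{lem:kerwpL}\ref{itm:kerwpL_annihilator} yields $N^{\perp}=\wpL{A}=\{0\}$, hence $\kerwpL{A}=\overline{\spn N}=A$; in particular $\bigcup_k N_k$ is norm dense in $A$.

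For part (a), since $\spn N$ is dense I would choose $v\in\spn N$ with $\|1-v\|<1$, so that $v$ is invertible by the Neumann series. Fixing $m$ with $v\in N_m$ and writing $v=z_1+\cdots+z_m$ with $z_i\in N$, I get $1=v^{-1}v=\sum_{i=1}^m v^{-1}z_i$, where each $v^{-1}z_i\in AN\subseteq N$ by Lemma~\ref{lem:kerwpL}\ref{itm:kerwpL_semigroup-ideal}; after renaming this is the desired representation $1=z_1+\cdots+z_m$ with all $z_i\in N$. The same ideal property gives, for every $a\in A$, $a=\sum_i az_i\in N_m$, whence $A=N_m$, proving (a). From now on I would take $m$ minimal with $N_m=A$, equivalently minimal with $1\in N_m$ (these coincide by the computation just made), and fix a representation $1=z_1+\cdots+z_m$, $z_i\in N$.

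The key by-product of the invertibility trick is that \emph{density already forces equality}: if $N_k$ is dense, then choosing $v\in N_k$ with $\|1-v\|<1$ and repeating the argument gives $1\in N_k$, hence $N_k=A$. Part (b) is then immediate: if $m>1$ and $N_{m-1}$ were dense we would obtain $N_{m-1}=A$, contradicting minimality of $m$, so $N_{m-1}$ is not dense. For part (c), suppose the chosen representation is linearly dependent, say $\sum_i c_iz_i=0$ with $c_{i_0}\neq 0$. Solving for $z_{i_0}$ and substituting into $1=\sum_i z_i$ expresses $1=\sum_{i\neq i_0}(1-c_i/c_{i_0})z_i$; by scale invariance each summand lies in $N$, so $1\in N_{m-1}$, again contradicting minimality. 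Hence $\{z_1,\dots,z_m\}$ is linearly independent, and in particular the $z_i$ are distinct and nonzero.

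The only genuine idea is the promotion of approximate membership to exact membership via an invertible element of $\spn N$ and the property $AN\subseteq N$; once that is in place, (b) and (c) are short minimality arguments and are where I expect essentially no difficulty. The points needing care are purely bookkeeping: verifying $\spn N=\bigcup_k N_k$ and the scale and symmetry properties of $N$, and confirming that ``$1\in N_m$'', ``$N_m=A$'', and ``$N_m$ dense'' all coincide so that the minimal $m$ is unambiguous.
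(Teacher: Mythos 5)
Your proof is correct. Parts (a) and (b) follow the same route as the paper: density of $\spn N$, the Neumann-series invertibility of an element $v\in\spn N$ with $\|1-v\|<1$, and the absorption $AN\subseteq N$ from Lemma~\ref{lem:kerwpL} to promote approximate membership to an exact representation of $1$. Part (c) is where you genuinely diverge. The paper proves linear independence functional-analytically: for each $z\in F$ it picks a maximal left ideal $L_z$ containing $\sum_{w\in F\setminus\{z\}}Aw$, notes $z\notin L_z$, and uses Hahn--Banach to produce $\gamma_z\in\dual{A}$ with $\gamma_z(z)=1$ and $\gamma_z(F\setminus\{z\})=\{0\}$, i.e.\ a dual system for $F$. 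You instead argue purely algebraically: a nontrivial dependence $\sum_i c_iz_i=0$ lets you rewrite $1=\sum_{i\neq i_0}(1-c_i/c_{i_0})z_i$, and scale invariance of $N$ (which you verify) puts each summand back in $N$, so $1$ would lie in a sum of $m-1$ copies of $N$, contradicting minimality of $m$. Your version is shorter, sidesteps the need to check that $\sum_{w\in F\setminus\{z\}}Aw$ is a proper left ideal (a point the paper leaves implicit, though it does follow from the minimality of $m$), and your explicit verification that $\spn N=\bigcup_k N_k$ via $0\in N$ and $\lambda N=N$ makes precise the paper's tacit passage from $1\in\spn N$ to a finite representation $1=\sum_{z\in F}z$ with $F\subseteq N$. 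What the paper's route buys in exchange is the explicit separating functionals $\gamma_z$, which vanish on the whole left ideals $Aw$ ($w\neq z$) and not merely on the points $w$; your argument yields only the independence itself.
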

\begin{proof}
Since $\wpL{A}=\{0\}$, then $A=\overline{\spn{N}}$ by Lemma~\ref{lem:kerwpL}. Thus, there exists $u\in\spn{N}$ with $\|1-u\|<1$. $u$ is a unit, so $1\in u^{-1}\spn{N}\subset\spn{N}$ by Lemma~\ref{lem:kerwpL}.
Let 
\begin{eqnarray}\label{eq:m-copies-N}
m = \min\{|F| : F\subseteq N \hspace{2mm}\textnormal{and}\hspace{2mm}  1 = \sum_{z\in F} z \}.
\end{eqnarray}
and $F=\{z_1,\dots,z_m\}\subset N$ with $1=z_1+\dots+z_m$. Then,
$A = Az_1+\dots+Az_m\subseteq N+\dots+N$ a sum of $m$ copies of $N$.

If the sum of $m-1$ copies of $N$ were dense in $A$, then there existed a unit $u=w_1+\dots+w_{m-1}\in A$, where $w_k\in N$. The set $E=u^{-1}\{w_1,\dots,w_{m-1}\}\subset N$ satisfies $|E|<m$ and $1 = \sum_{z\in E} z$, which contradicts with \eqref{eq:m-copies-N}.

Third, for a given $z\in F$, let $L_z$ be a maximal left ideal that contains 
$\displaystyle\sum_{w\in F\backslash\{z\}} Aw$. 
$z\notin L_z$ since otherwise $\displaystyle A = \sum_{w\in F} Aw \subseteq Az + L_z = L_z$. There exists $\gamma_z\in\dual{A}$ such that $\gamma_z(z)=1$ and $L_z\subseteq\ker{\gamma_z}$. 
As a result, $F$ is a linearly independent set. 

%

\end{proof}

The following corollary of Theorem~\ref{thm:main-raBa} improves \cite[Proposition~3.4]{Runde98} by removing the approximation property from the hypotheses. 
\begin{corollary}\normalfont\label{cor:Runde98-prop3.4}
Let $A$ be a simple, reflexive, amenable Banach algebra which possess a non-zero $f\in\ap{A}$.
Then $A$ is finite-dimensional. 
\end{corollary}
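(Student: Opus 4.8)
The plan is to deduce this corollary from Theorem~\ref{thm:main-raBa} by exploiting that $A$ is simple. First I would observe that the hypothesis gives a nonzero $f\in\ap{A}$, and since $A$ is reflexive we have $\ap{A}=\wpL{A}$ by the remark preceding Proposition~\ref{prop:wpL-AE}; hence $\wpL{A}\neq\{0\}$. By Lemma~\ref{lem:kerwpL}\ref{itm:kerwpL_annihilator} this means $\kerwpL{A}=\wpL{A}^{\perp}$ is a \emph{proper} closed two-sided ideal of $A$ (proper because $\wpL{A}\neq\{0\}$ forces some functional to vanish on $\kerwpL{A}$ but not on all of $A$).

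Next I would bring simplicity into play. Theorem~\ref{thm:main-raBa}\ref{itm:raBa_M1} tells us $\kerwpL{A}=Ae$ for some central idempotent $e\in A$, and $\kerwpL{A}$ has finite codimension. Since $A$ is simple, its only closed two-sided ideals are $\{0\}$ and $A$; as $\kerwpL{A}$ is a proper closed ideal, we must have $\kerwpL{A}=\{0\}$, equivalently $e=0$. But then, by the discussion immediately following Theorem~\ref{thm:main-raBa}, the case $\kerwpL{A}=\{0\}$ is precisely the case $e=0$, in which $A$ is semisimple and finite-dimensional. This yields the conclusion.

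The one point that needs care is whether $A$ is genuinely unital or merely amenable; Theorem~\ref{thm:main-raBa} is stated for reflexive amenable Banach algebras without a standing unitality assumption, and part~\ref{itm:raBa_M1} produces the central idempotent from amenability via the finite-codimensional hull structure, so no separate unitality hypothesis is required. I would also double-check that ``simple'' here is taken in the sense of having no nontrivial closed two-sided ideals (the natural convention for Banach algebras), since that is exactly what forces the dichotomy $\kerwpL{A}\in\{\{0\},A\}$; if instead one allowed $\kerwpL{A}=A$, that would make $\wpL{A}=\{0\}$, contradicting $f\neq 0$, so that branch is excluded and only $\kerwpL{A}=\{0\}$ survives.

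The main obstacle, such as it is, is really just the bookkeeping of showing $\kerwpL{A}$ is proper rather than all of $A$: one must rule out $\kerwpL{A}=A$ using $\wpL{A}\neq\{0\}$, and rule out $\kerwpL{A}=\{0\}$ being vacuous by noting it still delivers finite-dimensionality through Theorem~\ref{thm:main-raBa}. Once the proper-ideal observation is in place, simplicity collapses everything immediately, so I do not anticipate any substantive analytic difficulty beyond correctly invoking the finite-dimensionality conclusion attached to the $e=0$ case.
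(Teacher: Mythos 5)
Your proposal is correct and follows essentially the same route as the paper: use reflexivity to get $\wpL{A}=\ap{A}\neq\{0\}$, conclude that $\kerwpL{A}$ is a proper closed two-sided ideal, invoke simplicity to get $\kerwpL{A}=\{0\}$, and then read off finite-dimensionality from the finite codimension of $\kerwpL{A}$ in Theorem~\ref{thm:main-raBa}. The extra remarks about unitality are harmless (the paper notes that every reflexive amenable Banach algebra has an identity), so nothing further is needed.
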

\begin{proof}
Since $A$ is reflexive, then $\wpL{A} = \ap{A}\neq\{0\}$, thus $\kerwpL{A}$ is a proper ideal. Since $A$ is simple, $\kerwpL{A}=\{0\}$. Consequently, $A$ is finite-dimensional by Theorem~\ref{thm:main-raBa}.
\end{proof}
Corollary~\ref{cor:Runde98-prop3.4} could be phrased equivalently as: if $A$ is a simple, reflexive, amenable Banach algebra such that $\wpL{A}\neq\{0\}$, then $A$ is finite-dimensional.

\begin{theorem}\label{thm:minimalidempotents}\normalfont
Let $A$ be a reflexive unital Banach algebra for which \hbox{$\wpL{A}=\{0\}$.} Then, $A$ contains no minimal idempotents. 
\end{theorem}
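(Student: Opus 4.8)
The plan is to argue by contradiction: suppose $e\in A$ is a minimal idempotent, so that $eAe=\C e$, and let $\phi\in\dual{A}$ be the functional determined by $exe=\phi(x)e$; since $\phi(e)=1$ we have $\phi\neq 0$. I would set $L=Ae$, a minimal left ideal, and $M=A(1-e)=\{a\in A:ae=0\}=\ker R_e$, which is a maximal left ideal with $A/M\cong L$. In the associated irreducible representation $\pi\colon A\to B(L)$, $\pi(a)\xi=a\xi$, the idempotent $e$ acts as the rank-one idempotent $\pi(e)\xi=\eta(\xi)e$, where $\eta\in\dual{L}$ is the bounded coordinate functional $be\mapsto\phi(b)$ (well-defined and of norm $\le 1$ because $\eta(be)\,e=ebe$). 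The argument then splits according to $\dim L$.

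The finite-dimensional case is the clean one, and it is where the machinery of Section~\ref{sec:kerwpL} does the work. If $\dim L<\infty$ then $M$ has finite codimension, so Theorem~\ref{thm:fincodim-leftmaxideal}(b) gives $M\supseteq N$. Since $M=\ker R_e$, this says $ze=0$ for every $z\in N$. But $\wpL{A}=\{0\}$ forces, via Lemma~\ref{lem:kerwpL}, that $A=\kerwpL{A}=\overline{\spn{N}}$; hence $R_e$ vanishes on $N$ and, by continuity, on all of $\overline{\spn{N}}=A$. Thus $R_e=0$, so $e=1\cdot e=0$, a contradiction. I expect this endgame to be routine.

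The infinite-dimensional case is where the real difficulty lies, and I would attack it through $\pi$. Since $\pi$ is irreducible and its image contains the nonzero finite-rank operator $\pi(e)$, a standard density argument gives $\pi(A)\supseteq F(L)$, the finite-rank operators on $L$. Passing to $A/\ker\pi$, which is again reflexive and satisfies $\wpL{A/\ker\pi}=\{0\}$, I would try to contradict the fact that an infinite-dimensional $L$ makes $\overline{F(L)}=K(L)$ non-reflexive, whereas every closed subspace of the reflexive algebra $A/\ker\pi$ is reflexive. The main obstacle is precisely this step: the canonical map $A/\ker\pi\hookrightarrow B(L)$ is only norm-decreasing, not bounded below, so $\pi(A)$ need not be closed and one cannot immediately locate a copy of $K(L)$ inside $A/\ker\pi$. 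Resolving this requires controlling the range of $\pi$—for instance establishing that $\pi$ has closed range, in the spirit of the closed-range assertion of Theorem~\ref{thm:fincodim-leftmaxideal}(a), or, alternatively, manufacturing directly from the finite-rank operators in $\pi(A)$ a pair of weakly null sequences in $A$ that witness a nonzero element of $\wpL{A}$, which would already contradict $\wpL{A}=\{0\}$. I anticipate that this dimensional reduction, rather than the finite-dimensional conclusion, is the crux of the whole theorem.
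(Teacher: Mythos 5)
Your reduction to the two cases is set up correctly, and the finite-codimensional case is fine: if $\dim L<\infty$ then $M=\ker R_e$ is a proper maximal left ideal of finite codimension, so $M\supseteq N$ by Theorem~\ref{thm:fincodim-leftmaxideal}(b), whence $M\supseteq\overline{\spn N}=\kerwpL{A}=A$ by Lemma~\ref{lem:kerwpL}\ref{itm:kerwpL_annihilator} and $\wpL{A}=\{0\}$ --- a contradiction already, without even invoking $R_e$. This matches how the paper disposes of the finite-dimensional possibility (there it is phrased as: the chosen maximal left ideal must have infinite codimension).

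The infinite-dimensional case, however, is a genuine gap, and you have correctly diagnosed it but not closed it. Your two suggested escapes are not what is needed: the paper does \emph{not} prove that $\pi$ has closed range, and it does not manufacture weakly null sequences. The missing ingredient is the cited theorem of Barnes \cite{Barnes71}, which for an irreducible representation whose image contains a minimal idempotent of $\pi(A)$ yields not merely $F(L)\subseteq\pi(A)$ but the stronger containment $\mathcal{A}(L)\subseteq\pi(A)$, where $\mathcal{A}(L)=\overline{F(L)}$ is the \emph{closed} algebra of approximable operators. This stronger containment is exactly what neutralizes your worry about $\pi(A)$ not being closed: $\pi^{-1}(\mathcal{A}(L))$ is a closed, hence reflexive, subspace of $A$, and it maps continuously \emph{onto} the Banach space $\mathcal{A}(L)$; by the open mapping theorem $\mathcal{A}(L)$ is then isomorphic to a quotient of a reflexive space and so is itself reflexive, which is impossible when $L$ is infinite dimensional. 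With only $F(L)\subseteq\pi(A)$, as in your "standard density argument," the image of $\pi^{-1}(\mathcal{A}(L))$ is merely dense in $\mathcal{A}(L)$ and the open mapping theorem gives nothing, so your version of the argument cannot be completed as stated. In short: the crux you identified is real, and the paper resolves it by upgrading the density statement to Barnes' closed containment rather than by controlling the topology of $\pi(A)$ itself.
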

\begin{proof}
Suppose $p\in A$ is a minimal idempotent. Since $p\notin\jacobsonradical{A}$, then there exists a maximal left ideal $L\subset A$ such that $p\notin L$.
Since $\wpL{A}=\{0\}$, then the Banach left $A$-module $E=A/L$ is infinite dimensional by Theorem~\ref{thm:fincodim-leftmaxideal}. Clearly, since $A$ is reflexive as a Banach space, so is $E$.

Let $\pi:A\to B(E)$ be the reduction of the left regular representation. 
Then, $\pi$ is an irreducible representation of $A$ (e.g. \cite{Palmer1}), $\pi(p)\neq 0$ and $\pi(p)$ is a minimal idempotent of $\pi(A)$. Thus, $\pi(A)$ contains the space of approximable operators $\mathcal{A}(E)$ by \cite[Theorem 3]{Barnes71}. On the other hand, since $\pi(A)$ is reflexive as a Banach space, so is $\mathcal{A}(E)$. Thus, $E$ must be finite dimensional. Contradiction.
\end{proof}

If $A$ is a reflexive Banach algebra with AP, then the canonical map \\
$J:\ptensor{A}{A}\to\dual{(\itensor{\dual{A}}{\dual{A}})}$ is a linear isomorphism and
\begin{eqnarray*}
\ddual{(\ptensor{A}{A})} 
= (\ptensor{A}{A}) \oplus (\itensor{\dual{A}}{\dual{A}})^{\perp}
\end{eqnarray*}
as Banach spaces by \eqref{eq:directsumtensor}. This direct sum is, indeed, a direct sum of $A$-bimodules. 
Moreover, by \eqref{eq:center-directsum}
\begin{eqnarray*}
\mathcal{Z}(A,\ddual{(\ptensor{A}{A})} )
= \mathcal{Z}(A,\ptensor{A}{A}) 
\oplus \mathcal{Z}(A,(\itensor{\dual{A}}{\dual{A}})^{\perp}).
\end{eqnarray*}

\begin{theorem}\normalfont\label{thm:raBa-wpL0}
Suppose $A$ is a reflexive unital Banach algebra and \hbox{$\wpL{A}=\{0\}$.} Then,
$\mathcal{Z}(A, \ptensor{A}{A}) \subseteq \ker{J}$. 
If $A$ also has AP, then
$\mathcal{Z}(A, \ptensor{A}{A}) = \{0\}$. 
\end{theorem}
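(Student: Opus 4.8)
The plan is to deduce the second assertion from the first together with the isomorphism recorded just above the theorem: if $A$ has AP, then $J$ is injective, so $\ker J = \{0\}$ and hence $\mathcal{Z}(A,\ptensor{A}{A}) \subseteq \ker J = \{0\}$. Everything therefore reduces to proving the inclusion $\mathcal{Z}(A,\ptensor{A}{A}) \subseteq \ker J$ with no approximation hypothesis.

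Fix $\mathtt{m} \in \mathcal{Z}(A,\ptensor{A}{A})$ and write $\mathtt{m} = \sum_i x_i \otimes y_i$ with $\sum_i \|x_i\|\,\|y_i\| < \infty$. I identify $\itensor{\dual{A}}{\dual{A}}$ with the approximable operators $T : A \to \dual{A}$ (via $f\otimes g \mapsto (a\mapsto f(a)g)$, each such $T$ being compact); the canonical pairing then reads $\langle J(\mathtt{m}), T\rangle = \sum_i (Tx_i)(y_i)$, and it suffices to show this vanishes for every such $T$. The first step is to split the unit: since $\wpL{A} = \{0\}$, Proposition~\ref{lem:N-wpL0} gives $1 = z_1 + \dots + z_m$ with each $z_k \in N$, so $\mathtt{m} = \sum_{k=1}^m z_k \mathtt{m}$ and $\langle J(\mathtt{m}), T\rangle = \sum_k \langle J(z_k\mathtt{m}), T\rangle$; it is enough to kill each summand.

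For fixed $k$, choose weakly null sequences $(p_n),(q_n)$ in $A$ with $p_n q_n \to z_k$ weakly (Definition~\ref{def:kerwpL}). The map $c \mapsto c\mathtt{m}$ from $A$ into $\ptensor{A}{A}$ is bounded and linear, hence weak-to-weak continuous, so $(p_n q_n)\mathtt{m} \to z_k\mathtt{m}$ weakly; as $u \mapsto \langle J(u), T\rangle$ is a bounded (hence weakly continuous) functional, $\langle J(z_k\mathtt{m}), T\rangle = \lim_n \langle J((p_n q_n)\mathtt{m}), T\rangle$. Here centrality enters: $(p_n q_n)\mathtt{m} = p_n(q_n\mathtt{m}) = p_n(\mathtt{m}q_n) = \sum_i (p_n x_i)\otimes(y_i q_n)$, whence $\langle J((p_n q_n)\mathtt{m}), T\rangle = \sum_i (T(p_n x_i))(y_i q_n)$. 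Each term tends to $0$: $p_n x_i \to 0$ weakly (right multiplication is weakly continuous and $p_n \to 0$ weakly), so $T(p_n x_i) \to 0$ in norm because $T$ is compact, while $\|y_i q_n\|$ stays bounded. Since the weakly null sequences are bounded, the terms are dominated by $C\|x_i\|\,\|y_i\|$ with $\sum_i \|x_i\|\,\|y_i\| < \infty$, so a dominated-convergence argument lets me pass the limit through the sum over $i$ and conclude $\langle J(z_k\mathtt{m}), T\rangle = 0$. Summing over $k$ gives $J(\mathtt{m}) = 0$, as desired.

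I expect the main obstacle to be the interchange of the limit in $n$ with the (in general infinite) sum over $i$: this is where the absolutely summable projective representation $\sum_i \|x_i\|\,\|y_i\| < \infty$ and the norm-nullity $\|T(p_n x_i)\| \to 0$ coming from compactness of $T$ must be combined, rather than any single estimate. A secondary point to get right is the precise form of the pairing $\langle J(\mathtt{m}), T\rangle = \sum_i (Tx_i)(y_i)$ together with the weak continuity of the one-sided module actions $c \mapsto c\mathtt{m}$ and $c \mapsto \mathtt{m}c$ on $\ptensor{A}{A}$, which is exactly what transports the weak convergence $p_n q_n \to z_k$ in $A$ to weak convergence inside the tensor product.
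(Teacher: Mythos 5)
Your proof is correct, but it takes a genuinely different route from the paper's. The paper never decomposes the identity: given a central $\texttt{m}=\sum_i x_i\otimes y_i$ and $\phi\in\dual{A}$, it forms the slice operator $Ta=\sum_i \phi(y_ia)x_i$, observes that $T$ is compact, and uses centrality to identify $T$ with the right multiplication $R_v$ where $v=T\mathbf{1}$; Lemma~\ref{lem:wpL-AE-cpt} (no nonzero compact right multiplication when $\wpL{A}=\{0\}$, which rests on Proposition~\ref{prop:wpL-AE}) then forces $v=0$, hence $\psi\otimes\phi(\texttt{m})=\psi(v)=0$ for all $\psi,\phi$, i.e.\ $J(\texttt{m})$ vanishes on elementary tensors and therefore on all of $\itensor{\dual{A}}{\dual{A}}$. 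You instead invoke Proposition~\ref{lem:N-wpL0} to write $1=z_1+\dots+z_m$ with $z_k\in N$, transport the weak convergence $p_nq_n\to z_k$ through the module action, and kill $\langle J(z_k\texttt{m}),T\rangle$ by complete continuity of the compact operator $T$ plus a dominated-convergence interchange --- in effect you reprove by hand the instance of Lemma~\ref{lemma:wpL} that you need, and you test $J(\texttt{m})$ against arbitrary approximable operators rather than just elementary tensors. Your steps all check out: the pairing formula, the weak-to-weak continuity of $c\mapsto c\texttt{m}$, the norm-nullity of $T(p_nx_i)$, and the uniform domination by $C\|x_i\|\,\|y_i\|$ are all sound, and there is no circularity since Proposition~\ref{lem:N-wpL0} precedes this theorem. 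The trade-off: the paper's argument is shorter and stays within the Section~2 machinery, while yours makes the role of $N$ (and hence of the hypothesis $\wpL{A}=\{0\}$, which enters only through $1\in\spn N$) completely explicit; both arguments use unitality in an essential way.
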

\begin{proof}
Let $\mathbf{1}$ denote the identity of $A$.
Let $\texttt{m} = \sum_{i=1}^{\infty} x_i\otimes y_i \in \ptensor{A}{A}$ such that $a\texttt{m}=\texttt{m}a$ for all $a\in A$.
For a given $\phi\in\dual{A}$, define $T:A\to A$ by $$\displaystyle Ta = \sum_{i=1}^{\infty} \phi(y_ia)x_i,$$ and let $v=T\mathbf{1}$.
$T$ is compact, since it is the norm-limit of finite dimensional operators.
Second, for each $\psi\in\dual{A}$,
\begin{eqnarray*}
\psi(Ta) 
= \sum_{i=1}^{\infty} \psi(x_i) \phi(y_ia) 
= \psi\otimes\phi (\texttt{m}a)
= \psi\otimes\phi (a\texttt{m})
= \psi\left(\sum_{i=1}^{\infty} ax_i\phi(y_i)  \right)
= \psi(av)
\end{eqnarray*}
Thus, $T=R_v$ is a compact right multiplication operator on $A$. 
Hence, $v=0$ by Lemma~\ref{lem:wpL-AE-cpt}.

Consequently, $\psi\otimes\phi (\texttt{m}) = \sum_{i=1}^{\infty} \psi(x_i)\phi(y_i) = 0$ for each $\phi,\psi\in \dual{A}$. In particular, if $A$ has AP, we conclude that $\texttt{m}=0$.
\end{proof}

As a final note, suppose $A$ is a reflexive amenable Banach algebra $A$ with AP, and let $\kerwpL{A}$ be defined as in Definition~\ref{def:kerwpL}.
Every reflexive amenable Banach algebra $A$ has an identity, so $\kerwpL{A}$ is a reflexive unital Banach algebra such that $\wpL{\kerwpL{A}} = \{0\}$ by Theorem~\ref{thm:main-raBa}. $\kerwpL{A}$ also has AP. In fact, 
if ${{e}}$ is the identity of $\kerwpL{A}$, $K\subseteq \kerwpL{A}$ a compact subset, and $T:A\to A$ a finite dimensional operator, then ${{e}}T{{e}}:\kerwpL{A}\to\kerwpL{A}$ is a finite dimensional operator and
$$\sup_{x\in K} \|x - ({{e}}T{{e}})x\| 
= \sup_{x\in K} \|{{e}}(x - Tx)\| 
\leq \|{{e}}\| \sup_{x\in K} \|x - Tx\|.$$
Thus, $\mathcal{Z}(\kerwpL{A}, \ptensor{\kerwpL{A}}{\kerwpL{A}}) = \{0\}$ by Theorem~\ref{thm:raBa-wpL0}.
Consequently, every virtual diagonal $\texttt{M}$ of $\kerwpL{A}$ annihilates $\itensor{\dual{\kerwpL{A}}}{\dual{\kerwpL{A}}}$.

\begin{corollary}\normalfont
Let $A$ be a reflexive amenable Banach algebra with AP.
If $\mathcal{B}(A,\dual{A}) = \mathcal{K}(A,\dual{A})$, then $A$ is finite dimensional. 
\end{corollary}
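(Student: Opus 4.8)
The plan is to convert the hypothesis $\mathcal{B}(A,\dual{A}) = \mathcal{K}(A,\dual{A})$ into the reflexivity of a tensor product and then feed it into the virtual-diagonal analysis of $\kerwpL{A}$ carried out just before the statement. First I would record two identifications. Since $A$ is reflexive and has AP, its dual $\dual{A}$ has AP as well (the standard implication ``$\dual{X}$ has AP $\Rightarrow X$ has AP'' applied to $X=\dual{A}$, since $\ddual{A}=A$ has AP); hence every compact operator $A\to\dual{A}$ is approximable, so $\mathcal{K}(A,\dual{A}) = \mathcal{A}(A,\dual{A}) \cong \itensor{\dual{A}}{\dual{A}}$. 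On the other hand $\mathcal{B}(A,\dual{A}) \cong \dual{(\ptensor{A}{A})} = \ddual{(\itensor{\dual{A}}{\dual{A}})}$, the last equality coming from $\ptensor{A}{A} = \dual{(\itensor{\dual{A}}{\dual{A}})}$. Writing $Z = \itensor{\dual{A}}{\dual{A}}$, the inclusion $\mathcal{K}(A,\dual{A})\subseteq\mathcal{B}(A,\dual{A})$ is the canonical embedding $Z\hookrightarrow\ddual{Z}$, so the hypothesis says exactly that $Z$ is reflexive; equivalently $\ptensor{A}{A}$ is reflexive.

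Next I would pass to $B=\kerwpL{A}$. By Theorem~\ref{thm:main-raBa} and the discussion preceding the corollary, $B$ is a reflexive, unital, amenable Banach algebra with AP and $\wpL{B}=\{0\}$, and every virtual diagonal $\texttt{M}$ of $B$ annihilates $\itensor{\dual{B}}{\dual{B}}$, that is, lies in $(\itensor{\dual{B}}{\dual{B}})^{\perp}$. Since $B=Ae$ for a central idempotent $e$, it is complemented in $A$ by the map $P:a\mapsto ae$, and $P\otimes P$ exhibits $\ptensor{B}{B}$ as a complemented subspace of the reflexive space $\ptensor{A}{A}$; hence $\ptensor{B}{B}$ is reflexive. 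As $\ptensor{B}{B} = \dual{(\itensor{\dual{B}}{\dual{B}})}$, the space $\itensor{\dual{B}}{\dual{B}}$ is reflexive as well. (Equivalently, one lifts an operator $B\to\dual{B}$ to an operator $A\to\dual{A}$ through $P$ to see that the hypothesis descends to $B$ directly.)

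Then I would use reflexivity to collapse the annihilator. With $Z_B = \itensor{\dual{B}}{\dual{B}}$, the decomposition $\ddual{(\ptensor{B}{B})} = (\ptensor{B}{B})\oplus Z_B^{\perp}$ has $Z_B^{\perp}=\{0\}$, because the annihilator of a reflexive space $Z_B$ inside $\dddual{Z_B}=\ddual{(\ptensor{B}{B})}$ is trivial. Every virtual diagonal of $B$ lies in $Z_B^{\perp}$, so the only virtual diagonal is $0$. But $B$ is unital and amenable, hence admits a virtual diagonal $\texttt{M}$ with $\pi^{\ast\ast}(\texttt{M})$ equal to the identity of $B$, where $\pi:\ptensor{B}{B}\to B$ is the product map; if $\texttt{M}=0$ then the identity of $B$ is $0$, forcing $B=\{0\}$. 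Thus $\kerwpL{A}=\{0\}$, and by Theorem~\ref{thm:main-raBa} we get $A=A/\kerwpL{A}$, which is finite-dimensional.

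The main obstacle will be bookkeeping rather than any new idea: I must check that the identifications of $\mathcal{K}(A,\dual{A})$ and $\mathcal{B}(A,\dual{A})$ with $Z$ and $\ddual{Z}$ are compatible with the canonical embedding $Z\hookrightarrow\ddual{Z}$, so that the hypothesis is genuinely equivalent to reflexivity of $\itensor{\dual{A}}{\dual{A}}$ and not merely to some abstract isomorphism, and I must make sure reflexivity really transfers from $A$ to $B$. Once reflexivity of $\itensor{\dual{B}}{\dual{B}}$ is secured, the vanishing of the virtual diagonal, and hence finite-dimensionality, is immediate from the machinery already in place.
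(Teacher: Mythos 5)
Your proof is correct and follows essentially the same route as the paper: both reduce to $W=\kerwpL{A}$, invoke the discussion after Theorem~\ref{thm:raBa-wpL0} to see that every virtual diagonal of $W$ annihilates $\itensor{\dual{W}}{\dual{W}}$, and then use the hypothesis (together with AP, so that compact operators are approximable) to identify that space with all of $\dual{(\ptensor{W}{W})}$, forcing the virtual diagonal to vanish and hence $W=\{0\}$. Your reformulation of the hypothesis as reflexivity of $\itensor{\dual{A}}{\dual{A}}$ and the complementation argument for passing to $W$ are just equivalent packagings of the paper's chain $\itensor{\dual{W}}{\dual{W}}=\mathcal{K}(W,\dual{W})=\mathcal{B}(W,\dual{W})=\dual{(\ptensor{W}{W})}$, not a different method.
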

\begin{proof}
Let $\kerwpL{A}\subseteq A$ be as in Definition~\ref{def:kerwpL}. Since $\kerwpL{A}$ has finite codimension in $A$ by Theorem~\ref{thm:main-raBa}, then $\mathcal{B}(\kerwpL{A},\dual{\kerwpL{A}}) = \mathcal{K}(\kerwpL{A},\dual{\kerwpL{A}})$. Also $\kerwpL{A}$ is a reflexive amenable Banach algebra with AP such that $\wpL{\kerwpL{A}} = \{0\}$ by Theorem~\ref{thm:main-raBa}, and $\mathcal{Z}(\kerwpL{A}, \ptensor{\kerwpL{A}}{\kerwpL{A}}) = \{0\}$ by by Theorem~\ref{thm:raBa-wpL0}. 
Every virtual diagonal $\texttt{M}$ of $\kerwpL{A}$ annihilates $\itensor{\dual{\kerwpL{A}}}{\dual{\kerwpL{A}}} = \mathcal{K}(\kerwpL{A},\dual{\kerwpL{A}}) = \mathcal{B}(\kerwpL{A},\dual{\kerwpL{A}}) = \dual{(\ptensor{\kerwpL{A}}{\kerwpL{A}})}$, so  $\texttt{M}=0$. Thus, $\kerwpL{A}=\{0\}$, so $A$ is finite dimensional.
\end{proof}

\bibliography{/media/onur/sda8/O/TeX/Bibtex_files/All.bib,/media/onur/sda8/O/TeX/Bibtex_files/Textbook.bib,/media/onur/sda8/O/TeX/Bibtex_files/Banach_algebra.bib,/media/onur/sda8/O/TeX/Bibtex_files/Banach_spaces.bib,/media/onur/sda8/O/TeX/Bibtex_files/jwsc/jwsc.bib}

\end{document}